\documentclass[12pt,reqno]{amsart}
\usepackage[T1]{fontenc}
\usepackage{lmodern}
\usepackage{amsmath,amssymb,amsthm,mathtools}
\usepackage[margin=1in]{geometry}
\usepackage[hidelinks]{hyperref}
\newtheorem{theorem}{Theorem}[section]
\newtheorem{lemma}[theorem]{Lemma}
\newtheorem{proposition}[theorem]{Proposition}
\newtheorem{corollary}[theorem]{Corollary}
\theoremstyle{definition}
\newtheorem{definition}[theorem]{Definition}
\newtheorem{example}[theorem]{Example}
\theoremstyle{remark}
\newtheorem{remark}[theorem]{Remark}

\newcommand{\Q}{\mathbb{Q}}
\newcommand{\R}{\mathbb{R}}
\newcommand{\Sn}{\mathfrak{S}_n}
\newcommand{\Span}{\operatorname{span}}
\newcommand{\im}{\operatorname{Im}}
\newcommand{\Alt}{\operatorname{Alt}}

\newcommand{\J}{\mathcal{J}}
\newcommand{\G}{\mathcal{G}}
\newcommand{\Hh}{\mathcal{H}}

\title[Jordan elements in a free associative algebra]{A polynomial criterion for Jordan elements\newline
in a free associative algebra}

\author{F. A. Mashurov and B. K. Sartayev}

\subjclass[2020]{16R10, 17A01, 17A15}
\keywords{Associative algebra, free algebra, Jordan element.
}

\thanks{The results presented in this work were obtained with the assistance of the artificial intelligence system ChatGPT Astra. The mathematical arguments, computations, and proofs were subsequently independently checked and additionally verified using Claude Fable.}

\begin{document}

\begin{abstract}
Let $A=\Phi\langle X\rangle$ be a free associative algebra over a field
of characteristic zero, and let $J$ be the Jordan subalgebra of $A^{(+)}$
generated by $X$ and $1$. For every $n\geq1$ we construct an element
$U_n\in\mathbb Q[\mathfrak S_n]$ whose image on $A_n$ is exactly $J_n$.
If
\[
 \det(tI-U_n|_{V_n})=t^{e_n}q_n(t),\qquad q_n(0)\ne0,
\]
on the multilinear component $V_n$, then
\[
 a\in J_n\quad\Longleftrightarrow\quad a\,q_n(U_n)=0.
\]
Thus the criterion gives a finite algorithm for recognizing Jordan
elements: in each degree one constructs $U_n$ and $q_n$ and tests the
single equation $a\,q_n(U_n)=0$.
Moreover, $I-q_n(U_n)/q_n(0)$ is a projection of $A_n$ onto $J_n$.
We give the projection explicitly in degrees at most four, record the
multilinear dimensions through degree eight, and formulate the analogous
criterion on each fixed multihomogeneous component.

\end{abstract}

\maketitle




\section{Introduction }

Let
\[
A=\Phi\langle X\rangle
\]
be the free associative algebra over a field $\Phi$ of characteristic zero,
equipped with the Jordan product
\[
a\cdot b=\frac12(ab+ba).
\]
Denote by $J$ the Jordan subalgebra of $A^{(+)}$ generated by $X$ and $1$.
The elements of $J$ will be called Jordan elements.  The problem considered
in this paper is the following: given an associative polynomial
$a\in A$, determine effectively whether $a$ belongs to $J$.

This is a classical recognition problem in free algebras.  For Lie
elements in a free associative algebra, the corresponding problem is
solved by the Friedrichs and Specht--Wever criteria, while Witt's formula
determines the dimensions of the homogeneous components.  Robbins
\cite{Robbins} formulated the analogous questions for Jordan elements.
A fundamental role is played here by the involution
\[
(x_1x_2\cdots x_n)^*=x_n\cdots x_2x_1.
\]
Every Jordan element is reversible, that is, $a^*=a$, but the converse
is false in general.

The classical theorem of Cohn \cite{Cohn} describes the relation between
Jordan and reversible elements and, in particular, gives a complete
description when at most three generators are involved; see also
\cite{Zhevlakov}.  For four or more generators new phenomena appear.
Already in degree four the reversible tetrad
\[
x_1x_2x_3x_4+x_4x_3x_2x_1
\]
need not be a Jordan element.  Robbins \cite{Robbins} obtained a
Specht--Wever type criterion and dimension formulas for the smaller
space of \emph{simple Jordan elements}, namely the span of left-normed
Jordan products.  This space, however, is strictly smaller than the
space of all Jordan elements: in multilinear degree four their
dimensions are respectively $9$ and $11$.

The recognition problem is closely related to the general difficulty
of understanding free Jordan objects.  One has to distinguish the
abstract free Jordan algebra from its special realization inside a free
associative algebra.  The kernel of the canonical homomorphism from the
free Jordan algebra to the free special Jordan algebra consists of the
special, or $s$-, identities.  The first such identities occur in
degree eight through the classical work of Glennie \cite{Glennie};
the three-variable special identities were subsequently studied in
considerable detail, see for example \cite{Sverchkov}.

More recently, Kashuba and Mathieu \cite{KashubaMathieu} studied the
homogeneous components of free Jordan algebras and proposed conjectural
formulas for their characters and dimensions.  Their approach uses Lie
algebras associated with free Jordan algebras by the
Tits--Allison--Gao version of the Tits--Kantor--Koecher construction
and relates the problem to their homology.  Extensive computations
provided strong evidence for these conjectures.  Dotsenko and Hentzel
\cite{DotsenkoHentzel}, however, obtained further computational data
and showed that the Kashuba--Mathieu conjecture does not hold in
general. 

In the present paper we approach the special realization from a
different direction.  Instead of constructing a basis of $J_n$, we
construct, for every degree $n$, a canonical element
\[
U_n\in\mathbb Q[S_n]
\]
whose image on the degree-$n$ component $A_n$ is exactly $J_n$.
The construction starts from the homogeneous recursion
\[
J_{n+2}=J_{n+1}\cdot J_1+J_n\cdot J_2,
\]
which appears already in Robbins' work.  If
\[
\det(tI-U_n|_{V_n})=t^{e_n}q_n(t),
\qquad q_n(0)\ne0,
\]
on the multilinear component $V_n$, we prove that
\[
a\in J_n
\quad\Longleftrightarrow\quad
a\,q_n(U_n)=0.
\]
Equivalently,
\[
\Pi_n=I-\frac{q_n(U_n)}{q_n(0)}
\]
is a projection of $A_n$ onto $J_n$.

Thus the membership problem for Jordan elements of fixed degree is
reduced to an exact finite computation in the group algebra
$\mathbb Q[S_n]$.  The construction is independent of the number of
generators and remains valid for words with repeated letters; it also
restricts naturally to every fixed multihomogeneous component.

In low degrees the resulting criterion takes a particularly simple
form.  For $n\le3$, Jordan elements coincide with reversible elements.
In degree four we obtain
\[
a\in J_4
\quad\Longleftrightarrow\quad
a^*=a,\qquad a\,\operatorname{Alt}_4=0,
\]
which identifies explicitly the first obstruction to reversibility
being sufficient.  The same operators also determine the multilinear
dimensions of the free special Jordan algebra.  We compute these
dimensions through degree eight and give explicit characteristic
polynomials in degrees five, six, and seven.

The Mathematica implementation used for the exact matrix computations is
available from \cite{Code}.

\textbf{Acknowledgements.} The authors are grateful to Professors A.~S.~Dzhumadil'daev and N.~A.~Ismailov, who posed this problem at the Algebra Seminar of Al-Farabi Kazakh National University (2014-2017). Their formulation of the problem provided the starting point for the present work. The second author also expresses his sincere gratitude to Professors Efim Zelmanov  and Iryna Kashuba for their courses at SUSTech. Attending these courses provided additional motivation for studying the problem considered in this paper.

\section{Definitions and notation}

Throughout the paper, $\Phi$ is a field of characteristic zero and $X$ is
a nonempty set. Let
\[
 A=\Phi\langle X\rangle
\]
be the free unital associative algebra on $X$. Thus, the words in $X$,
including the empty word $1$, form a basis of $A$, and multiplication of
words is concatenation. We use the usual grading
\[
 A=\bigoplus_{n\geq0}A_n,
 \qquad A_0=\Phi1,
\]
where $A_n$ is spanned by words of length $n$. Every element of $A$ is a
finite linear combination of words, even when $X$ is infinite.

A Jordan algebra is a commutative algebra satisfying
\[
 (a^2\cdot b)\cdot a=a^2\cdot(b\cdot a),
 \qquad a^2=a\cdot a.
\]
The vector space $A$ with multiplication
\begin{equation}\label{eq:jordan-product}
 a\cdot b=\frac12(ab+ba)
\end{equation}
is a Jordan algebra, denoted by $A^{(+)}$. Indeed, commutativity is
immediate, and both sides of the Jordan identity expand to
\[
 \frac14\bigl(a^2ba+ba^3+a^3b+aba^2\bigr).
\]

\begin{definition}
Let $J$ be the subalgebra of $A^{(+)}$ generated by $X\cup\{1\}$. Elements
of $J$ are called \emph{Jordan elements}. Put $J_n=J\cap A_n$.
\end{definition}

Thus $J$ is the special Jordan algebra generated by $X$ inside the free
associative algebra. It should not be confused with the abstract free
Jordan algebra, which may have a nonzero kernel under the natural map to
the free special Jordan algebra in higher degrees.

A Jordan monomial in the generators is an expression obtained from
generators by repeated use of \eqref{eq:jordan-product}, with an arbitrary
placement of parentheses. Its degree is the number of generators in the
expression, counted with repetitions. Since $1\cdot a=a$, the space $J$
is spanned by $1$ and these monomials. Consequently,
\[
 J=\bigoplus_{n\geq0}J_n,
 \qquad J_0=\Phi1,\qquad J_1=A_1,
 \qquad J_2=\Span_\Phi\{x\cdot y:x,y\in X\}.
\]
Here and below, the product of two subspaces means the linear span of all
products of their elements.

For $a\in A$, let $R_a$ denote right multiplication by $a$ in $A^{(+)}$:
\[
 bR_a=b\cdot a.
\]
Operators act on the right. Thus, $bST=(bS)T$, and $aST$ never means
$(aT)S$. A product without parentheses is left-normed:
\[
 b_1\cdot b_2\cdots b_r
 =((b_1\cdot b_2)\cdots)\cdot b_r.
\]
For $r=1$ this product means $b_1$.

Define the reversal map $*$ by
\[
 1^*=1,\qquad (x_1x_2\cdots x_n)^*=x_n\cdots x_2x_1,
\]
and extend it linearly. This is an involutive anti-automorphism of $A$:
$(ab)^*=b^*a^*$. It follows that
\[
 (a\cdot b)^*=a^*\cdot b^*.
\]
Let $H=\{a\in A:a^*=a\}$ and $H_n=H\cap A_n$. Since the generators and
$1$ are fixed by $*$, we have $J\subseteq H$.

Let $G$ be the smallest linear subspace of $A$ containing $1$ and
invariant under every $R_x$, $x\in X$. Equivalently, $G$ is spanned by $1$
and the left-normed Jordan products of generators. Its elements are
called \emph{simple Jordan elements}. Writing $G_n=G\cap A_n$, we obtain
\[
 G_n\subseteq J_n\subseteq H_n.
\]
The criterion below describes $J_n$, without restricting the
parenthesization of Jordan monomials.

\section{Linear and quadratic factors}

In this section, we prove the recursion for Jordan elements and define
the operators used in the criterion.

\begin{lemma}[\cite{Zhevlakov}]\label{lem:operator}
For all $a,b,c\in A$, the following identity holds in $A^{(+)}$:
\begin{equation}\label{eq:operator}
\begin{split}
 R_{(a\cdot b)\cdot c}
 ={}&R_{a\cdot b}R_c+R_{a\cdot c}R_b+R_{b\cdot c}R_a\\
 &-R_aR_cR_b-R_bR_cR_a.
\end{split}
\end{equation}
\end{lemma}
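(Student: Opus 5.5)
Since $A^{(+)}$ is special (it sits inside the associative algebra $A$), the identity \eqref{eq:operator} can be verified by direct expansion in $A$. Fix an arbitrary $d\in A$ and compare $d$ times both sides, using the right-action convention $dR_uR_v=(d\cdot u)\cdot v$. The plan is to express everything through associative products. First, each right multiplication is written as $R_u=\tfrac12(L'_u+R'_u)$, where $L'_u\colon d\mapsto ud$ and $R'_u\colon d\mapsto du$ are the associative left and right multiplications. These commute with each other, and they satisfy $R'_uR'_v=R'_{uv}$ and $L'_uL'_v=L'_{vu}$ (in right-action notation). Then both sides become linear combinations of expressions $p\,d\,q$, with $p,q$ monomials in $a,b,c$ of total degree three.

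For the left-hand side, expand $(a\cdot b)\cdot c=\tfrac14(abc+bac+cab+cba)$. Hence
\[
d\cdot((a\cdot b)\cdot c)=\tfrac18\bigl(abc\,d+bac\,d+cab\,d+cba\,d+d\,abc+d\,bac+d\,cab+d\,cba\bigr).
\]
On the right-hand side, each term $dR_{u\cdot v}R_w$ expands into eight terms with coefficient $\tfrac18$. Each triple product $dR_uR_vR_w$ also gives eight terms with coefficient $\tfrac18$. Each term has the form $\pi\,d\,\sigma$, where the letters are split between the two sides of $d$, and the left factor appears in reversed order of application.

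The key step is bookkeeping by the "split type": how many of $a,b,c$ stand to the left of $d$. This gives four types.

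\emph{Type 3 (all letters left).} The three $R_{\cdot}R$ terms together produce all six orderings of $a,b,c$, each with total coefficient one unit of $\tfrac18$. The two triple terms subtract $cba\,d$ and $acb\,d$ (or their analogues), leaving exactly the four words $abc$, $bac$, $cab$, $cba$ from the left-hand side. This needs care with the order reversal.

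\emph{Type 0 (all letters right).} This is symmetric to Type 3 under the involution $*$. Alternatively it is read off the same way.

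\emph{Mixed types (one or two letters on the left).} These must cancel completely. For example, the terms $u\,d\,vw$-type coming from $R_{a\cdot b}R_c$ must be matched against the corresponding terms from $-R_aR_cR_b-R_bR_cR_a$ and from the other two quadratic terms.

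The main obstacle is only this careful sign and order tracking in the mixed types. I would organize it by listing, for each of the six possible single-letter-left placements (e.g.\ $x\,d\,yz$), the contributions from each of the five operator terms. I would then check that the sum is zero, using the symmetry $a\leftrightarrow b$ of the identity to halve the work. The symmetry $d\mapsto d^*$ combined with $*$ applied to $a,b,c$ exchanges Type $k$ with Type $3-k$, which halves the work again.

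Alternatively, the identity can be cited as the standard linearization of the Jordan identity in operator form, $R_{(a\cdot b)\cdot c}=\dots$, valid in every Jordan algebra (see \cite{Zhevlakov}). The direct associative check above is self-contained, so I would present that.
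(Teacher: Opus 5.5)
Your direct verification is sound, but it takes a different route from the paper. The paper gives no argument of its own and simply cites the identity from Zhevlakov et al. There it is an operator form of the linearized Jordan identity, and it holds in every Jordan algebra, special or not. That route is more general and needs no case analysis. Yours is self-contained and uses only associativity of $A$. This is enough here, because the lemma is stated only in $A^{(+)}$ and used there in the proof of Lemma~\ref{lem:recursion}. To make the split-type bookkeeping rigorous, carry it out with $a,b,c,d$ as free noncommuting letters and then substitute. For particular elements of $A$, words of different types need not be distinct, but an identity in the free algebra specializes to any elements.

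There is one concrete slip in your Type~3 step. The triple terms contribute $-\tfrac18(bca\,d+acb\,d)$, not $cba\,d$ and $acb\,d$. The left factor of $dR_aR_cR_b$ is $bca$, the reverse of the application order $a,c,b$, and that of $dR_bR_cR_a$ is $acb$. Subtracting $cba\,d$, as you wrote, would delete a word that the left-hand side contains and leave $bca\,d$ uncancelled. With the correct words, the six orderings from the quadratic terms minus $bca\,d$ and $acb\,d$ leave exactly $abc\,d$, $bac\,d$, $cab\,d$, $cba\,d$. Type~0 likewise loses $d\,acb$ and $d\,bca$.

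In the mixed types your prediction holds. Each of the six words $x\,d\,yz$, and likewise each $xy\,d\,z$, occurs once in the three quadratic terms and once, with a minus sign, in the two triple terms. For instance, $dR_aR_cR_b$ contributes $a\,d\,cb$, $c\,d\,ab$, $b\,d\,ac$ and $ca\,d\,b$, $ba\,d\,c$, $bc\,d\,a$. So the identity follows once the Type~3 correction is made.
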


The following recursion is classical; it is Robbins' Theorem~2(iv)
\cite{Robbins}. We include a proof in the present notation because it is
the starting point of the operator construction.

\begin{lemma}\label{lem:recursion}
For every $n\geq0$, we have
\begin{equation}\label{eq:recursion}
 J_{n+2}=J_{n+1}\cdot J_1+J_n\cdot J_2.
\end{equation}
\end{lemma}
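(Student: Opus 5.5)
The inclusion $\supseteq$ is immediate, since $J$ is closed under the Jordan product and the grading is respected: $J_{n+1}\cdot J_1$ and $J_n\cdot J_2$ lie in $J\cap A_{n+2}=J_{n+2}$. For $n=0$ the right-hand side contains $J_1\cdot J_1\supseteq\{x\cdot y\}$, which spans $J_2$, so equality holds. The work is therefore the inclusion $\subseteq$ for $n\ge1$. I will prove it by strong induction on the degree $n+2$.

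Denote by $K_m$ the right-hand side $J_{m-1}\cdot J_1+J_{m-2}\cdot J_2$ for $m\ge2$. The space $J_m$ is spanned by products $u\cdot v$ with $u\in J_p$, $v\in J_q$, $p+q=m$, $p,q\ge1$ (the degree-zero factor $1$ gives nothing new). By commutativity we may assume $q\le p$. The first step is the reduction: it suffices to show that $u\cdot v\in K_m$ whenever $v$ is itself a Jordan monomial $v=v_1\cdot v_2$ of degree $q\ge3$, with $v_i\in J_{q_i}$, $q_1+q_2=q$. The cases $q=1,2$ are already in $K_m$ by definition.

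For $q\ge3$ I will use Lemma~\ref{lem:operator}. Since $q\ge3$, at least one of $v_1,v_2$ has degree $\ge2$, say $v_1$. By the induction hypothesis (degree $q_1<m$), $v_1$ is a sum of elements $a\cdot b$ with $b\in J_1\cup J_2$. By linearity we may write $v=(a\cdot b)\cdot c$ with $\deg b\le 2$. Then
\[
u\cdot v=uR_{(a\cdot b)\cdot c}
=uR_{a\cdot b}R_c+uR_{a\cdot c}R_b+uR_{b\cdot c}R_a-uR_aR_cR_b-uR_bR_cR_a .
\]
The terms ending in $R_b$ are of the form $w\cdot b$ with $w\in J_{m-\deg b}$ and $\deg b\in\{1,2\}$, so they lie in $K_m$. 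Each remaining term has the shape $w\cdot z$ where $z\in\{c,a\}$ has degree strictly less than $q$ and $w\in J_{m-\deg z}$. Hence I will run a secondary induction on $q$, the degree of the right factor. Formally, I claim that $J_p\cdot J_q\subseteq K_m$ for all $p+q=m$, proceeding by induction on $\min(p,q)$ and using commutativity to swap factors. Terms with right factor of degree $\le2$ are in $K_m$ outright; terms with smaller right-factor degree are covered by the secondary hypothesis.

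The main obstacle is ensuring that the secondary induction genuinely decreases. In $uR_{a\cdot b}R_c$ the right factor is $c$, of degree $q-\deg a-\deg b<q$. In $uR_{b\cdot c}R_a$ it is $a$, with $\deg a<q$. Both are fine provided $\deg c\ge1$ and $\deg a\ge1$. Degenerate cases where $a$ or $c$ is the scalar $1$ must be discarded separately. Here $1\cdot w=w$ shows that such factors reduce to smaller products directly, or we simply choose the decomposition with all factors of positive degree, as the primary induction allows. Since all degrees are positive integers, the double induction terminates, and this gives $J_{n+2}\subseteq K_{n+2}$.
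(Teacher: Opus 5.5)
Your argument is correct and is essentially the paper's proof. Both use Lemma~\ref{lem:operator} to replace right multiplication by an element of degree $\ge 3$ with compositions of right multiplications by elements of smaller degree, and both induct on the degree of that multiplier; the paper only packages this more economically, proving that the recursively defined space $K$ (with $K_{n+2}=K_{n+1}\cdot J_1+K_n\cdot J_2$) is invariant under every $R_M$ and then using $a=1R_a$, which avoids your double induction. One simplification: your primary induction, used only to force $\deg b\le 2$, is unnecessary, because the two terms ending in $R_b$ also have a right factor of degree less than $q$ and are already covered by your secondary induction on $\min(p,q)$.
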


\begin{proof}
Define subspaces $K_n\subseteq A_n$ by
\[
 K_0=\Phi1,\qquad K_1=A_1,\qquad
 K_{n+2}=K_{n+1}\cdot J_1+K_n\cdot J_2,
\]
and put $K=\bigoplus_{n\geq0}K_n$. Induction on $n$ gives $K_n\subseteq
J_n$. Moreover, $K$ is invariant under $R_x$ and $R_{x\cdot y}$ for all
$x,y\in X$.

We prove that $K$ is invariant under $R_M$ for every Jordan monomial $M$.
The assertion holds when $\deg M\leq2$. Suppose that $\deg M=d\geq3$
and the assertion is true for monomials of smaller degree. By
commutativity, we can write
\[
 M=(a\cdot b)\cdot c,
\]
where $a,b,c$ are Jordan monomials of positive degree. Each of
\[
 a,\quad b,\quad c,\quad a\cdot b,\quad a\cdot c,\quad b\cdot c
\]
has degree less than $d$. By the induction hypothesis, $K$ is invariant
under right multiplication by each of them. Identity
\eqref{eq:operator} now implies that $K$ is invariant under $R_M$.

Since $R_1=I$ and $a\mapsto R_a$ is linear, $K$ is invariant under $R_a$
for every $a\in J$. But $1\in K$, and hence $a=1R_a\in K$. Thus $J=K$.
Comparing their homogeneous components gives \eqref{eq:recursion}.
\end{proof}

\begin{corollary}\label{cor:factors}
The space $J$ is spanned by $1$ and left-normed Jordan products whose
factors are of the form $x$ or $x\cdot y$, where $x,y\in X$.
\end{corollary}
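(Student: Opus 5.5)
The plan is to unwind the recursion of Lemma~\ref{lem:recursion} by induction on the degree. We prove, for every $n\geq0$, that $J_n$ is spanned by left-normed products
\[
 f_1\cdot f_2\cdots f_r,
\]
where each factor $f_i$ is either a generator $x$ or a product $x\cdot y$ with $x,y\in X$. We also allow the empty product $1$ when $n=0$. Together with $J=\bigoplus_n J_n$, this gives the corollary.

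The base cases are immediate. First, $J_0=\Phi1$. Second, $J_1=A_1$ is spanned by the generators. Third, $J_2$ is spanned by the elements $x\cdot y$, each of which is a one-factor product of the required type. It is also a two-factor product $x\cdot y$ with factors in $X$, so either reading works.

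For the inductive step, assume the claim holds for $J_{n+1}$ and $J_n$. By \eqref{eq:recursion},
\[
 J_{n+2}=J_{n+1}\cdot J_1+J_n\cdot J_2 .
\]
Since the Jordan product is bilinear, $J_{n+1}\cdot J_1$ is spanned by products $p\cdot x$, where $p$ runs over a spanning set of $J_{n+1}$ and $x\in X$. Likewise, $J_n\cdot J_2$ is spanned by products $q\cdot(x\cdot y)$, where $q$ runs over a spanning set of $J_n$.

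By the induction hypothesis, we may take $p$ and $q$ to be left-normed products of factors of the allowed form. Then $p\cdot x$ and $q\cdot(x\cdot y)$ are again left-normed products, each with one extra factor of the allowed form; this uses the convention that an unparenthesized product is left-normed. In the edge case $n=0$, we have $q=1$, and $1\cdot(x\cdot y)=x\cdot y$ is a single allowed factor.

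There is no real obstacle here. The only care needed is in the low-degree bookkeeping: handling $1$ as the empty product and reading one-factor products correctly. All of the substantive work was already done in Lemma~\ref{lem:recursion}.
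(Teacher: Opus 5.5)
Your proposal is correct and follows essentially the same route as the paper: unwind the recursion $J_{n+2}=J_{n+1}\cdot J_1+J_n\cdot J_2$, noting that each step appends one factor from $J_1$ or $J_2$, which are spanned by $x$ and $x\cdot y$. Because the recursion is an equality, your induction yields $J_n$ equal to the span of such products, so it already covers the converse inclusion that the paper states separately.
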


\begin{proof}
Apply \eqref{eq:recursion} repeatedly. The factor added at each step
belongs to $J_1$ or $J_2$, and these spaces are spanned by the stated
elements. Conversely, every such product belongs to $J$.
\end{proof}

For $n\geq1$, let $\mathcal C_n$ be the set of all compositions of $n$
with parts one and two. Thus,
\[
 \mathcal C_n=\{(\lambda_1,\ldots,\lambda_r):
 \lambda_i\in\{1,2\},\ \lambda_1+\cdots+\lambda_r=n\}.
\]
For $\lambda\in\mathcal C_n$, define a linear operator $T_\lambda$ on
$A_n$ as follows. Divide a word $w=x_1\cdots x_n$ into consecutive blocks
of lengths $\lambda_1,\ldots,\lambda_r$. Replace a block $x_i$ by $x_i$
and a block $x_ix_{i+1}$ by $x_i\cdot x_{i+1}$. Then take the left-normed
Jordan product of the resulting blocks. This product is $wT_\lambda$.
For example,
\[
 (x_1x_2x_3x_4x_5)T_{(2,1,2)}
 =((x_1\cdot x_2)\cdot x_3)\cdot(x_4\cdot x_5).
\]
The letters need not be distinct.

\begin{corollary}\label{cor:span}
For every $n\geq1$, we have
\begin{equation}\label{eq:span}
 J_n=\sum_{\lambda\in\mathcal C_n}A_nT_\lambda.
\end{equation}
\end{corollary}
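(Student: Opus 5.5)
Corollary~\ref{cor:span} follows from Corollary~\ref{cor:factors} once we check two inclusions. The only ingredient beyond that corollary is the grading of $J$.

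For the inclusion $\supseteq$, let $\lambda\in\mathcal C_n$ and let $w=x_1\cdots x_n$ be a word. The element $wT_\lambda$ is a left-normed Jordan product of factors $x_i$ and $x_i\cdot x_{i+1}$ with $x_i\in X$. Each factor lies in $J$, and $J$ is a subalgebra of $A^{(+)}$, so $wT_\lambda\in J$. The element is also homogeneous of degree $n$, because the Jordan product of homogeneous elements of degrees $p$ and $q$ is homogeneous of degree $p+q$. Hence $wT_\lambda\in J\cap A_n=J_n$. Since $A_nT_\lambda$ is spanned by the images of words, $A_nT_\lambda\subseteq J_n$, and summing over $\lambda$ gives $\supseteq$.

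For the inclusion $\subseteq$, take $a\in J_n$ with $n\ge1$. By Corollary~\ref{cor:factors}, $a$ is a linear combination of $1$ and left-normed products $P=f_1\cdot f_2\cdots f_r$, where each $f_j$ is either a generator $y$ or a product $y\cdot z$ with $y,z\in X$. Each such $P$ is homogeneous of degree $\lambda_1+\cdots+\lambda_r$, where $\lambda_j=\deg f_j\in\{1,2\}$. Because $A$ is graded and $a\in A_n$, we may discard $1$ and every $P$ whose degree differs from $n$ without changing $a$. So $a$ is a combination of products $P$ with $(\lambda_1,\ldots,\lambda_r)\in\mathcal C_n$.

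It remains to write each surviving $P$ as $wT_\lambda$. List the letters of the factors $f_1,\ldots,f_r$ in order, reading $y$ for a factor $y$ and $y,z$ for a factor $y\cdot z$. This gives a word $w$ of length $n$. By the definition of $T_\lambda$, $wT_\lambda=P$. Repeated letters cause no difficulty, since $T_\lambda$ is defined on arbitrary words. Hence $P\in A_nT_\lambda$, and so $a\in\sum_{\lambda\in\mathcal C_n}A_nT_\lambda$.

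There is no real obstacle. The one point needing care is the homogeneity step, which lets us drop the constant term and the components of the wrong degree from the representation given by Corollary~\ref{cor:factors}.
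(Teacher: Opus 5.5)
Your proof is correct and follows the paper's argument. Both inclusions come from Corollary~\ref{cor:factors}: a left-normed product of factors $x$ and $x\cdot y$ of total degree $n$ is read off as $wT_\lambda$, where $\lambda$ is given by the degrees of the factors. You also spell out the homogeneity step (discarding $1$ and the products of other degrees), which the paper leaves implicit.
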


\begin{proof}
Each image $A_nT_\lambda$ is contained in $J_n$. Conversely, a product
in Corollary~\ref{cor:factors} of total degree $n$ determines a
composition $\lambda\in\mathcal C_n$ by the degrees of its factors.
Reading the generators in these factors gives a word $w$ such that the
product is $wT_\lambda$. This proves the reverse inclusion.
\end{proof}

\section{The polynomial criterion}

Let $\Sn$ be the symmetric group on $\{1,\ldots,n\}$. A permutation
$\sigma\in\Sn$ acts on the positions of a word by
\[
 (x_1\cdots x_n)\sigma=x_{\sigma(1)}\cdots x_{\sigma(n)}.
\]
We compose permutations by $(\sigma\tau)(i)=\sigma(\tau(i))$, so this is
a right action. The group algebra $\Q[\Sn]$ is the vector space of formal
rational linear combinations of permutations, with multiplication
extended linearly from their composition. Its elements therefore act on
$A_n$; the rational coefficients are viewed in $\Phi$.

Expanding every Jordan product in the definition of $T_\lambda$ shows
that $T_\lambda$ is an element of $\Q[\Sn]$. Indeed, each associative
term is a permutation of the original positions, and its coefficient
is rational and independent of the chosen letters.

For $T=\sum_\sigma c_\sigma\sigma\in\Q[\Sn]$, set
\[
 T^\dagger=\sum_\sigma c_\sigma\sigma^{-1}.
\]
Then $(ST)^\dagger=T^\dagger S^\dagger$. On a finite set of words,
declare those words to be an orthonormal basis. Every position
permutation is represented by a permutation matrix, so $T^\dagger$ is
the transpose of $T$. This remains true for words with repeated letters.
The operation $\dagger$ is an operation on operators and is different
from the reversal $*$ of elements of $A$.

Define
\begin{equation}\label{eq:U}
 U_n=\sum_{\lambda\in\mathcal C_n}T_\lambda^\dagger T_\lambda
 \quad\in\Q[\Sn].
\end{equation}
The order of the factors in \eqref{eq:U} agrees with our convention that
operators act on the right.

\begin{lemma}\label{lem:image}
For every field $\Phi$ of characteristic zero and every set $X$,
\begin{equation}\label{eq:image}
 A_nU_n=J_n.
\end{equation}
\end{lemma}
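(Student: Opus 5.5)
We prove the two inclusions separately. The inclusion $A_nU_n\subseteq J_n$ follows from Corollary~\ref{cor:span}. For any $a\in A_n$ and $\lambda\in\mathcal C_n$, the element $aT_\lambda^\dagger$ again lies in $A_n$, since $T_\lambda^\dagger\in\Q[\Sn]$ acts on $A_n$. Hence $aT_\lambda^\dagger T_\lambda\in A_nT_\lambda\subseteq J_n$, and summing over $\lambda$ gives $aU_n\in J_n$.

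For the reverse inclusion we use a Gram-operator argument, $\operatorname{Im}(\sum T_\lambda^\dagger T_\lambda)=\sum\operatorname{Im}T_\lambda$. This is a positivity statement, so we first reduce to a rational, finite-dimensional situation. Fix a finite multiset of letters, i.e.\ a multihomogeneous component $A_\alpha\subseteq A_n$ spanned by the finitely many words with a given content $\alpha$. Every element of $\Q[\Sn]$ preserves $A_\alpha$. Both sides of \eqref{eq:image} decompose along these components: $J_n=\sum_\lambda A_nT_\lambda=\bigoplus_\alpha\sum_\lambda A_\alpha T_\lambda$, and every element of $A_n$ involves finitely many components. So it suffices to show $A_\alpha U_n=\sum_\lambda A_\alpha T_\lambda$.

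On $A_\alpha$ all the operators are represented by rational matrices in the word basis, since a permutation sends words to words. Let $W=\Q$-span of the words of content $\alpha$, so $A_\alpha=W\otimes_\Q\Phi$. The image of a rational matrix over $\Phi$ is the $\Phi$-span of its image over $\Q$, because rank is invariant under field extension. So it suffices to prove the equality over $\Q$, and then over $\R$, where the words form an orthonormal basis. As observed before the definition of $U_n$, $T^\dagger$ is then the transpose (adjoint) of $T$ on $W_\R$. Operators act on the right, i.e.\ on row vectors.

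Suppose $v$ is orthogonal to $W_\R U_n$. Since $U_n$ is symmetric, $U_nv^{\mathsf T}=0$ in column convention, so
\[
0=vU_nv^{\mathsf T}=\sum_\lambda (vT_\lambda^\dagger)(vT_\lambda^\dagger)^{\mathsf T}=\sum_\lambda\|vT_\lambda^\dagger\|^2 .
\]
Hence $vT_\lambda^\dagger=0$ for all $\lambda$, which means $v$ is orthogonal to every $W_\R T_\lambda$. Therefore $(W_\R U_n)^\perp\subseteq(\sum_\lambda W_\R T_\lambda)^\perp$, giving $\sum_\lambda W_\R T_\lambda\subseteq W_\R U_n$. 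Combined with the first inclusion, this yields equality over $\R$. Equal rank then gives equality over $\Q$, since $W U_n\subseteq\sum_\lambda W T_\lambda$ and both have the same dimension as their real extensions. Finally extend scalars to $\Phi$.

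The main point needing care is this reduction. Characteristic zero ensures $\Q\subseteq\Phi$, and the whole argument must take place on the rational word lattice; no inner product on $\Phi$ itself is used. The repeated-letter case needs the transpose claim to hold for permutation matrices on words of fixed content. It does, since each position permutation induces a bijection of the basis words, hence a permutation matrix, and $\sigma^{-1}$ induces its inverse, which is its transpose.
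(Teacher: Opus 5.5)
Your proof is correct and follows essentially the paper's argument. Both use the Gram identity $\langle aU_n,a\rangle=\sum_\lambda\|aT_\lambda^\dagger\|^2$ over $\R$ with the words orthonormal, then invariance of the rank of rational matrices under field extension, together with the trivial inclusion $A_nU_n\subseteq\sum_\lambda A_nT_\lambda$. The only difference is that you localize to the finite-dimensional multihomogeneous components $A_\alpha$, whereas the paper restricts to finite subalphabets; the paper itself uses your localization later, in Proposition~\ref{prop:multihomogeneous}, and it handles infinite $X$ equally well.
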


\begin{proof}
First, suppose that $X$ is finite and $\Phi=\R$. Give $A_n$ the inner
product in which the words form an orthonormal basis. For $a\in A_n$,
\begin{equation}\label{eq:positive}
 \langle aU_n,a\rangle
 =\sum_{\lambda\in\mathcal C_n}
   \langle aT_\lambda^\dagger,aT_\lambda^\dagger\rangle.
\end{equation}
In particular, $U_n$ is symmetric and positive semidefinite. Equation
\eqref{eq:positive} gives
\[
 \ker U_n=\bigcap_{\lambda\in\mathcal C_n}\ker T_\lambda^\dagger.
\]
For any operator $T$ on this finite-dimensional inner product space,
$(\im T)^\perp=\ker T^\dagger$. Consequently,
\[
 \im U_n=(\ker U_n)^\perp
 =\left(\bigcap_{\lambda\in\mathcal C_n}
              \ker T_\lambda^\dagger\right)^\perp
 =\sum_{\lambda\in\mathcal C_n}\im T_\lambda.
\]
Corollary~\ref{cor:span} proves \eqref{eq:image} over $\R$.

All matrices in this argument have rational entries. Let $B$ be the
matrix obtained by stacking the matrices of the $T_\lambda$ vertically,
using row vectors for the right action. Its row space is the sum of
their images. We have proved that $B$ and $U_n$ have the same rank over
$\R$. A rational matrix has the same rank over $\Q$ and over any field
of characteristic zero: its rank is the largest size of a nonzero
minor, and a rational minor vanishes in one such field if and only if
it vanishes in all of them. Thus $B$ and $U_n$ have the same rank over
$\Phi$. On the other hand, \eqref{eq:U} always gives
\[
 A_nU_n\subseteq\sum_{\lambda\in\mathcal C_n}A_nT_\lambda.
\]
Equality of dimensions proves \eqref{eq:image} over $\Phi$.

Finally, let $X$ be arbitrary. Every Jordan polynomial uses finitely
many generators, and the operators $T_\lambda$ and $U_n$ preserve the
subalgebra on any finite subset of $X$. The finite-alphabet case
therefore gives $J_n\subseteq A_nU_n$. The opposite inclusion follows
directly from \eqref{eq:U} and \eqref{eq:span}.
\end{proof}

Let $z_1,\ldots,z_n$ be distinct symbols and let
\[
 V_n=\Span_\Q\{z_{\sigma(1)}\cdots z_{\sigma(n)}:\sigma\in\Sn\}
 \subseteq\Q\langle z_1,\ldots,z_n\rangle.
\]
This is the multilinear component: each of its basis words contains
every $z_i$ exactly once. Its dimension is $n!$. Define
\begin{equation}\label{eq:charpoly}
 \chi_n(t)=\det(tI-U_n|_{V_n})=t^{e_n}q_n(t),
 \qquad q_n(0)\ne0,
\end{equation}
where $e_n$ is the largest exponent such that $t^{e_n}$ divides
$\chi_n(t)$. Thus $q_n\in\Q[t]$ is specified entirely by the operators
$T_\lambda$. For a polynomial $f(t)=\sum_k f_kt^k$, the notation $f(U_n)$
means the operator $\sum_k f_kU_n^k$, with $U_n^0=I$.

\begin{lemma}\label{lem:annihilation}
The identity
\begin{equation}\label{eq:annihilation}
 U_nq_n(U_n)=0
\end{equation}
holds in $\Q[\Sn]$. In particular, it holds on $A_n$ for every alphabet
and every field of characteristic zero.
\end{lemma}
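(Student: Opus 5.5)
Because $V_n$ is the regular representation of $\Q[\Sn]$, I will translate the identity into a statement about a rational symmetric matrix and settle it by diagonalization. The map $\Q[\Sn]\to\operatorname{End}(V_n)$, $T\mapsto(v\mapsto vT)$, is injective. Indeed, the word $z_1\cdots z_n$ is sent by $\sigma$ to $z_{\sigma(1)}\cdots z_{\sigma(n)}$, and these words are distinct for distinct $\sigma$. So if $T=\sum_\sigma c_\sigma\sigma$ annihilates $z_1\cdots z_n$, then every $c_\sigma=0$. It therefore suffices to prove that $U_nq_n(U_n)$ acts as zero on $V_n$. The statement for an arbitrary alphabet and an arbitrary field $\Phi$ of characteristic zero then follows at once, because an identity in $\Q[\Sn]$ holds in every right $\Q[\Sn]$-module, and $A_n$ is such a module via the position action.

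To show that $U_nq_n(U_n)$ vanishes on $V_n$, I will use the argument from the proof of Lemma~\ref{lem:image}. Give $V_n\otimes\R$ the inner product in which the $n!$ multilinear words are orthonormal. Then each $\sigma^{-1}$ acts as the transpose of $\sigma$, so $T_\lambda^\dagger$ is the transpose of $T_\lambda$. Hence $U_n$ is represented by a rational symmetric positive semidefinite matrix $M$.

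The key step is this. A real symmetric matrix is diagonalizable, so its minimal polynomial is $m(t)=\prod_{\mu}(t-\mu)$, taken over the distinct eigenvalues $\mu$, each appearing once. Write $\chi_n(t)=t^{e_n}q_n(t)$. The nonzero eigenvalues of $M$ are exactly the roots of $q_n$, while $t$ divides $m(t)$ only if $0$ is an eigenvalue. Therefore $m(t)$ divides $t\,q_n(t)$, and so $M\,q_n(M)=0$ on $V_n\otimes\R$. Since $M$ is rational, this identity already holds on $V_n$ over $\Q$. The edge case $e_n=0$ causes no trouble: then $q_n=\chi_n$, and $M\chi_n(M)=0$ follows from Cayley--Hamilton.

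The only real subtlety is identifying $U_n$ with a symmetric matrix, that is, checking that $\dagger$ corresponds to the transpose in the basis of multilinear words. Under the right action, $\sigma$ permutes the words $z_{\tau(1)}\cdots z_{\tau(n)}$ by $\tau\mapsto\tau\sigma$. This is a permutation matrix, and its inverse $\sigma^{-1}$ gives the transpose, as already noted in the text. Once this is in place, the argument is pure linear algebra. The essential point is the diagonalizability of $U_n$, which is why the zero eigenvalue appears to multiplicity one in $t\,q_n(t)$ rather than $t^{e_n}$.
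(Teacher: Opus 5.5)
Your proof is correct and follows essentially the same route as the paper. You use the symmetry, hence diagonalizability, of $U_n$ on $V_n\otimes\R$ to conclude that $t\,q_n(t)$ annihilates it there and therefore over $\Q$. You then use faithfulness of the action on $z_1\cdots z_n$ to lift the identity to $\Q[\Sn]$ and hence to every $A_n$. Phrasing the diagonalizability step through the minimal polynomial rather than an eigenvector basis is only a cosmetic difference.
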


\begin{proof}
On $V_n\otimes_\Q\R$, the operator $U_n$ is real symmetric. By the
spectral theorem, it has a basis of eigenvectors. Its nonzero
eigenvalues are roots of $q_n(t)$, by \eqref{eq:charpoly}; its zero
eigenvalue, if present, is annihilated by the factor $t$. Hence
$tq_n(t)$ annihilates $U_n$ on $V_n\otimes_\Q\R$, and therefore also
on $V_n$.

The action of $\Q[\Sn]$ on $V_n$ is faithful. Indeed, if
$P=\sum_\sigma c_\sigma\sigma$ acts as zero, then
\[
 0=(z_1\cdots z_n)P
   =\sum_{\sigma\in\Sn}c_\sigma
                   z_{\sigma(1)}\cdots z_{\sigma(n)}.
\]
The words on the right are distinct, so every $c_\sigma$ is zero.
Applying this observation to $P=U_nq_n(U_n)$ proves
\eqref{eq:annihilation} in the group algebra. Its action on any $A_n$
is therefore zero, including when the letters in a word coincide.
\end{proof}

\begin{theorem}\label{thm:criterion}
Let $n\geq1$ and $a\in A_n$. Then the following conditions are equivalent:
\begin{enumerate}
\item $a\in J_n$;
\item $a\,q_n(U_n)=0$;
\item $a\Pi_n=a$, where
\begin{equation}\label{eq:projector}
 \Pi_n=I-\frac{q_n(U_n)}{q_n(0)}.
\end{equation}
\end{enumerate}
Moreover, $\Pi_n^2=\Pi_n$ and $A_n\Pi_n=J_n$.
\end{theorem}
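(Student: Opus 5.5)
The argument rests on the factorization $t\,q_n(t)$ with $q_n(0)\neq0$. Since $q_n(0)\neq 0$, the polynomials $t$ and $q_n(t)$ are coprime in $\Q[t]$. So we can write
\[
1=\frac{q_n(t)}{q_n(0)}+t\,h(t),\qquad h(t)=\frac{q_n(0)-q_n(t)}{q_n(0)\,t}\in\Q[t].
\]
Here $h$ is a polynomial because $q_n(0)-q_n(t)$ vanishes at $t=0$. Substituting $U_n$ gives the identity
\[
I=\frac{q_n(U_n)}{q_n(0)}+U_nh(U_n)
\]
in $\Q[\Sn]$, hence on every $A_n$. By definition $\Pi_n=I-q_n(U_n)/q_n(0)=U_nh(U_n)=h(U_n)U_n$, using that polynomials in $U_n$ commute.

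\textbf{Idempotence and image.} Expand $\Pi_n^2-\Pi_n=-\Pi_n\,q_n(U_n)/q_n(0)=-h(U_n)U_nq_n(U_n)/q_n(0)$. This vanishes by Lemma~\ref{lem:annihilation}, so $\Pi_n^2=\Pi_n$. For the image, $\Pi_n=h(U_n)U_n$ gives $A_n\Pi_n\subseteq A_nU_n=J_n$ by Lemma~\ref{lem:image}. Conversely, every $b\in J_n$ has the form $b=cU_n$ with $c\in A_n$, again by Lemma~\ref{lem:image}. Then $b\,q_n(U_n)=c\,U_nq_n(U_n)=0$ by Lemma~\ref{lem:annihilation}, so $b\Pi_n=b$. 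This shows $A_n\Pi_n=J_n$ and also gives (1)$\Rightarrow$(2).

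\textbf{The equivalences.} (2)$\Leftrightarrow$(3) is immediate from the definition of $\Pi_n$, because $q_n(0)\neq0$. (3)$\Rightarrow$(1) holds because $a=a\Pi_n\in A_n\Pi_n\subseteq J_n$. Together with (1)$\Rightarrow$(2) from the previous step, the three conditions are equivalent.

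\textbf{Where the difficulty lies.} The only delicate point is making sure the polynomial identities are used in $\Q[\Sn]$ rather than only on $V_n$. This matters because $A_n$ may have repeated letters or an infinite alphabet. Lemma~\ref{lem:annihilation} already establishes the annihilation in the group algebra itself, and Lemma~\ref{lem:image} holds for any alphabet and field. Given these two lemmas, the rest is the Bezout-type splitting above, so I expect no real obstacle.
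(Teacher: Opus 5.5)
Your proof is correct and follows essentially the paper's argument: your Bezout-type splitting is the paper's $q_n(t)=q_n(0)+t\,r_n(t)$ with $h=-r_n/q_n(0)$, so $\Pi_n$ becomes a polynomial multiple of $U_n$, and then the image lemma ($A_nU_n=J_n$) and the annihilation lemma ($U_nq_n(U_n)=0$ in $\Q[\Sn]$) finish the argument. The differences are only in bookkeeping. You verify $\Pi_n^2=\Pi_n$ directly from $U_nq_n(U_n)=0$, rather than deducing it from $A_n\Pi_n\subseteq J_n$ together with $\Pi_n$ acting as the identity on $J_n$. You also route (2)$\Rightarrow$(1) through (3) and the image of $\Pi_n$ instead of writing $a$ explicitly as an element of $A_nU_n$.
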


\begin{proof}
Suppose first that $a\in J_n$. By Lemma~\ref{lem:image}, we have
$a=bU_n$ for some $b\in A_n$. Lemma~\ref{lem:annihilation} gives
\[
 a\,q_n(U_n)=bU_nq_n(U_n)=0.
\]

Conversely, suppose that $a\,q_n(U_n)=0$. Write
\[
 q_n(t)=q_n(0)+tr_n(t),\qquad r_n(t)\in\Q[t].
\]
Since $q_n(0)\ne0$, we obtain
\[
 a=-\frac{1}{q_n(0)}\,aU_nr_n(U_n)
   =\left(-\frac{1}{q_n(0)}\,ar_n(U_n)\right)U_n.
\]
Thus $a\in A_nU_n=J_n$. This proves the equivalence of the first two
conditions. Formula \eqref{eq:projector} proves the equivalence of the
second and third conditions.

The same polynomial $r_n$ gives
\[
 \Pi_n=-\frac{r_n(U_n)U_n}{q_n(0)}.
\]
Therefore, $A_n\Pi_n\subseteq J_n$. We have already proved that $\Pi_n$
acts as the identity on $J_n$. Hence $A_n\Pi_n=J_n$ and
$\Pi_n^2=\Pi_n$.
\end{proof}

\begin{corollary}\label{cor:decomposition}
For every $n\geq1$,
\[
 A_n=J_n\oplus\ker U_n.
\]
If $a=\sum_{n\geq0}a_n$ is the homogeneous decomposition of an element
of $A$, then
\[
 a\in J\quad\Longleftrightarrow\quad
 a_nq_n(U_n)=0\ \text{for all }n\geq1.
\]
There is no condition on $a_0\in\Phi1$.
\end{corollary}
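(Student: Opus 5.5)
The plan is to derive the decomposition directly from the projection $\Pi_n$ of Theorem~\ref{thm:criterion}, writing it in terms of $U_n$. First I show that $\ker\Pi_n=\ker U_n$ on $A_n$.

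\emph{Inclusion $\ker U_n\subseteq\ker\Pi_n$.} As in the proof of Theorem~\ref{thm:criterion}, write
\[
 \Pi_n=-\frac{r_n(U_n)U_n}{q_n(0)}=-\frac{U_nr_n(U_n)}{q_n(0)},
\]
which is legitimate because polynomials in $U_n$ commute. If $aU_n=0$, this formula gives $a\Pi_n=0$.

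\emph{Inclusion $\ker\Pi_n\subseteq\ker U_n$.} Suppose $a\Pi_n=0$, that is, $a\,q_n(U_n)=q_n(0)a$. Multiply on the right by $U_n$ and use Lemma~\ref{lem:annihilation}:
\[
 q_n(0)\,aU_n=a\,q_n(U_n)U_n=aU_nq_n(U_n)=0.
\]
Since $q_n(0)\ne0$, we get $aU_n=0$.

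An idempotent operator splits $A_n$ as its image plus its kernel, that is, $A_n=A_n\Pi_n\oplus\ker\Pi_n$. Theorem~\ref{thm:criterion} gives $\Pi_n^2=\Pi_n$ and $A_n\Pi_n=J_n$. Combined with $\ker\Pi_n=\ker U_n$, this yields $A_n=J_n\oplus\ker U_n$.

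For the second statement, I use that $J=\bigoplus_n J_n$ is a graded subspace, as noted in Section~2, and that $J_0=\Phi1$ is all of $A_0$. Hence $a\in J$ if and only if every homogeneous component satisfies $a_n\in J_n$. For $n=0$ this condition is automatic, and for $n\ge1$ it is equivalent to $a_nq_n(U_n)=0$ by Theorem~\ref{thm:criterion}. Because $a$ has only finitely many nonzero components, and the condition is trivial when $a_n=0$, only finitely many checks are needed.

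The only point that needs care is the kernel identification: it uses that $U_n$ commutes with $q_n(U_n)$, together with Lemma~\ref{lem:annihilation}. Everything else follows formally from the theorem.
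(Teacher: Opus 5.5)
Your proposal is correct and follows essentially the paper's proof: the paper likewise gets $\ker\Pi_n=\ker U_n$ from $\Pi_n$ being a polynomial multiple of $U_n$ together with $\Pi_nU_n=U_n$ (your multiplication by $U_n$). It then uses the image–kernel splitting of the idempotent $\Pi_n$ and the grading of $J$ with $J_0=\Phi1$, exactly as you do.
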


\begin{proof}
The identity \eqref{eq:annihilation} gives
$U_n\Pi_n=\Pi_nU_n=U_n$. Since $\Pi_n$ is a polynomial multiple of
$U_n$, it follows that $\ker\Pi_n=\ker U_n$. Every idempotent gives
the direct sum of its image and kernel, so the first assertion follows
from Theorem~\ref{thm:criterion}. The second follows because $J$ is
graded and $J_0=\Phi1$.
\end{proof}

\begin{remark}\label{rem:orthogonal}
For a finite alphabet over $\R$, the operator $\Pi_n$ is the orthogonal
projection onto $J_n$. Indeed, it is a polynomial in the symmetric
operator $U_n$, so it is symmetric; it is also idempotent and has image
$J_n$. The use of an inner product in the proof imposes no order or
positivity assumption on $\Phi$. The resulting identities have rational
coefficients and hold over every field of characteristic zero.
\end{remark}

\begin{remark}
The same criterion holds in positive degrees for the nonunital free
associative algebra and the Jordan subalgebra generated by $X$. Adding
the identity element changes only the degree-zero component, because
$1\cdot a=a$.
\end{remark}

\begin{corollary}\label{cor:multilinear-dimension}
For every $n\geq 1$,
\[
\dim (J\cap V_n)
   =\operatorname{rank}(U_n|_{V_n})
   =n!-e_n
   =\deg q_n .
\]
\end{corollary}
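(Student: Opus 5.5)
We need three equalities: $\dim(J\cap V_n)=\operatorname{rank}(U_n|_{V_n})$, $\operatorname{rank}(U_n|_{V_n})=n!-e_n$, and $n!-e_n=\deg q_n$.

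\emph{First equality.} Apply Lemma~\ref{lem:image} with the alphabet $X=\{z_1,\ldots,z_n\}$ and $\Phi=\Q$; the statement is understood in this multilinear setting. Then $V_n\subseteq A_n$, and $V_n$ is invariant under every permutation of positions, hence under $U_n\in\Q[\Sn]$. Moreover $A_n$ splits as a direct sum of multihomogeneous components, and each $T_\lambda$, and hence $U_n$, preserves this splitting. So $A_nU_n=J_n$ decomposes componentwise, and its multilinear component is $V_nU_n=J_n\cap V_n$. Therefore $\dim(J\cap V_n)=\dim V_nU_n=\operatorname{rank}(U_n|_{V_n})$.

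\emph{Second equality.} Over $\R$, the proof of Lemma~\ref{lem:image} and Lemma~\ref{lem:annihilation} show that $U_n|_{V_n}$ is real symmetric with respect to the orthonormal basis of words. It is therefore diagonalizable, so the algebraic multiplicity of the eigenvalue $0$ equals $\dim\ker U_n|_{V_n}$. This algebraic multiplicity is exactly $e_n$ in \eqref{eq:charpoly}. Hence
\[
\operatorname{rank}(U_n|_{V_n})=n!-e_n .
\]
Rank is unchanged between $\Q$ and $\R$, as already noted in the proof of Lemma~\ref{lem:image}.

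\emph{Third equality.} $\chi_n$ is monic of degree $n!=\dim V_n$, and $\chi_n(t)=t^{e_n}q_n(t)$. Comparing degrees gives $\deg q_n=n!-e_n$.

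The only point needing care is the second equality. There the diagonalizability (symmetry) of $U_n$ is essential, since otherwise the algebraic and geometric multiplicities of the eigenvalue $0$ could differ.
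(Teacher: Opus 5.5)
Your proof is correct and follows the paper's own argument. Lemma~\ref{lem:image} gives $V_nU_n=J\cap V_n$; the symmetry of $U_n$ then identifies $e_n$ with $\dim\ker U_n|_{V_n}$, and rank--nullity plus a degree count finish. Your added observation that $U_n$ preserves the multihomogeneous decomposition of $A_n$ justifies the step $V_nU_n=J\cap V_n$, which the paper states without comment.
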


\begin{proof}
By Lemma~\ref{lem:image},
\[
V_nU_n=J\cap V_n.
\]
Hence
\[
\dim(J\cap V_n)=\operatorname{rank}(U_n|_{V_n}).
\]
Since $U_n$ is symmetric, it is diagonalizable, and in
\[
\det(tI-U_n|_{V_n})=t^{e_n}q_n(t),
\qquad q_n(0)\neq0,
\]
the integer $e_n$ is the dimension of $\ker U_n$.  Since
$\dim V_n=n!$, the rank-nullity theorem gives
\[
\operatorname{rank}(U_n|_{V_n})=n!-e_n.
\]
The last quantity is also $\deg q_n$.
\end{proof}

\begin{table}[ht]
\centering
\begin{tabular}{c|r|r|r|r}
$n$
&
$\dim V_n=n!$
&
$e_n$
&
$\dim(J\cap V_n)=\deg q_n$
&
$\dim(H\cap V_n)-\dim(J\cap V_n)$
\\
\hline
1 & 1     & 0     & 1     & 0    \\
2 & 2     & 1     & 1     & 0    \\
3 & 6     & 3     & 3     & 0    \\
4 & 24    & 13    & 11    & 1    \\
5 & 120   & 65    & 55    & 5    \\
6 & 720   & 390   & 330   & 30   \\
7 & 5040  & 2695  & 2345  & 175  \\
8 & 40320 & 21273 & 19047 & 1113
\end{tabular}
\caption{Multilinear dimensions obtained from the exact matrices $U_n$.
Here $J$ denotes the special Jordan subalgebra of the free associative
algebra. For $n\ge2$, one has $\dim(H\cap V_n)=n!/2$.}
\label{tab:low-degree-dimensions}
\end{table}

Table~\ref{tab:low-degree-dimensions} records the output relevant to both
the recognition criterion and the dimension problem. The multiplicity
$e_n$ of the zero eigenvalue of $U_n$ determines the multilinear dimension
of the free special Jordan algebra by
\[
\dim(J\cap V_n)=n!-e_n=\deg q_n.
\]
Thus the same finite matrix calculation which produces the recognition
polynomial $q_n$ also gives the multilinear dimension.  In degree $8$,
for example,
\[
\dim(J\cap V_8)=19047,
\qquad
\dim(H\cap V_8)=20160,
\]
so the codimension of the Jordan elements in the reversible multilinear
space is $1113$.

\section{Low degrees}

In this section, all multilinear computations are initially made over
$\Q$. Let $\J(n)$, $\G(n)$ and $\Hh(n)$ denote the intersections of
$V_n$ with, respectively, the Jordan elements, the simple Jordan
elements and the reversible elements in
$\Q\langle z_1,\ldots,z_n\rangle$. Let $\rho_n\in\Sn$ be the reversal
permutation, $\rho_n(i)=n+1-i$. Thus $a\rho_n=a^*$ for $a\in A_n$.

\begin{proposition}\label{prop:small}
The polynomials in \eqref{eq:charpoly} in degrees one, two and three are
\[
 q_1(t)=t-1,\qquad q_2(t)=t-2,\qquad
 q_3(t)=(t-3)\left(t-\frac34\right)^2.
\]
For $1\leq n\leq3$, the projection in \eqref{eq:projector} is
\begin{equation}\label{eq:small-projector}
 \Pi_n=\frac{I+\rho_n}{2}.
\end{equation}
\end{proposition}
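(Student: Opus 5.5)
Since everything happens in the group algebra $\Q[\Sn]$ for $n\le 3$, the plan is to compute $U_n$ explicitly as an element of $\Q[\Sn]$ and then read off its spectrum on $V_n$ (the right regular representation).

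\emph{Degree one.} $\mathcal C_1=\{(1)\}$ and $T_{(1)}=I$, so $U_1=I$, $\chi_1=t-1$, and $\Pi_1=I=(I+\rho_1)/2$, because $\rho_1=I$.

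\emph{Degree two.} Here $\mathcal C_2=\{(1,1),(2)\}$, and both operators send $x_1x_2$ to $x_1\cdot x_2$. So $T_{(1,1)}=T_{(2)}=(I+\rho_2)/2$. This element is a self-adjoint idempotent ($\rho_2^\dagger=\rho_2^{-1}=\rho_2$), hence
\[
U_2=2\cdot\frac{I+\rho_2}{2}=I+\rho_2 .
\]
On $V_2$ the element $\rho_2$ has eigenvalues $\pm1$, each with multiplicity one. Thus $\chi_2=t(t-2)$ and $q_2=t-2$. Then
\[
\Pi_2=I-\frac{U_2-2}{-2}=I+\frac{U_2-2I}{2}=\frac{U_2}{2}=\frac{I+\rho_2}{2}.
\]

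\emph{Degree three.} Here $\mathcal C_3=\{(1,1,1),(2,1),(1,2)\}$, with $T_{(1,1,1)}=T_{(2,1)}$, since both send $x_1x_2x_3$ to $(x_1\cdot x_2)\cdot x_3$. Expanding,
\[
(x_1\cdot x_2)\cdot x_3=\tfrac14(x_1x_2x_3+x_2x_1x_3+x_3x_1x_2+x_3x_2x_1),
\]
\[
x_1\cdot(x_2\cdot x_3)=\tfrac14(x_1x_2x_3+x_1x_3x_2+x_2x_3x_1+x_3x_2x_1).
\]
Writing $P=T_{(2,1)}$ and $Q=T_{(1,2)}$ as sums of permutations, we get $U_3=2P^\dagger P+Q^\dagger Q$.

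Rather than multiply all of this out, I would determine the spectrum through the Wedderburn decomposition of $\Q[\mathfrak S_3]$. $V_3$ splits as trivial $\oplus$ sign $\oplus$ two copies of the $2$-dimensional standard representation, and $U_3$ acts through each factor of the group algebra. Concretely:
\begin{itemize}
\item On the trivial representation, every permutation acts as $1$. So $P\mapsto1$, $Q\mapsto1$, and $U_3\mapsto 2+1=3$, giving the eigenvalue $3$ once.
\item On the sign representation, $P\mapsto\tfrac14(1-1+1-1)=0$ and likewise $Q\mapsto0$, so $U_3\mapsto0$.
\item On the standard representation, I would compute the real $2\times2$ matrices of $P$ and $Q$ in an orthonormal model (for instance the permutation action on $\{v\in\R^3:\sum v_i=0\}$, taking the transpose for the right action). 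Then $U_3$ is represented by a symmetric $2\times2$ matrix $M=2P^tP+Q^tQ$. I expect $M$ to have eigenvalues $\tfrac34$ and $0$. Since the standard representation occurs with multiplicity $2$ in $V_3$, this contributes $(t-\tfrac34)^2t^2$.
\end{itemize}
Altogether $\chi_3=t^3(t-3)(t-\tfrac34)^2$, which is consistent with $e_3=3$ in Table~\ref{tab:low-degree-dimensions}. Computing the $2\times2$ matrix $M$ is the only real calculation and the main place an arithmetic slip could occur. As a check, I would use the trace: $\operatorname{tr}(U_3|_{V_3})=6\cdot(\text{coefficient of }I\text{ in }U_3)$ should equal $3+2\cdot\tfrac34=4.5$.

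\emph{The projection for $n=3$.} I avoid computing $q_3(U_3)$ directly. By Remark~\ref{rem:orthogonal}, $\Pi_3$ is the orthogonal projection onto $J_3$, and likewise $(I+\rho_3)/2$ is the orthogonal projection onto $H_3$, because $\rho_3$ is a symmetric involution. So it suffices to show $J_3=H_3$. We have $J_3\subseteq H_3$ in general. In $V_3$ both have dimension $3$: $\operatorname{rank}U_3=3$ by Corollary~\ref{cor:multilinear-dimension} and the spectrum above, and $\dim\Hh(3)=6/2=3$. Hence $\J(3)=\Hh(3)$ inside $\Q[\mathfrak S_3]$ acting faithfully.

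To pass to arbitrary $A_3$, I would note that $\Pi_3$ and $(I+\rho_3)/2$ are both elements of $\Q[\mathfrak S_3]$ that agree on $V_3$, being the same orthogonal projection there. By the faithfulness argument in Lemma~\ref{lem:annihilation} they are equal in the group algebra, and hence equal on every $A_3$. The same reasoning covers $n=1,2$ uniformly.
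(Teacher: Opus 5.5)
Your proof is correct. For $n=1,2$ and for identifying $\Pi_3$ it matches the paper: both are orthogonal projections onto the same space, and faithfulness then transfers the equality to the group algebra.

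For the spectrum in degree three you take a genuinely different route. The paper first proves $\G(3)=\J(3)=\Hh(3)$: it expands the three monomials $(z_i\cdot z_j)\cdot z_k$ in the basis $s_1,s_2,s_3$ and checks a $3\times3$ determinant. It then writes the matrix $\frac34\left(\begin{smallmatrix}2&1&1\\1&2&1\\1&1&2\end{smallmatrix}\right)$ of $U_3$ on $\Hh(3)$ and uses symmetry to get $U_3=0$ on $\Hh(3)^\perp$. You instead read the spectrum off the Wedderburn components of $\Q[\mathfrak S_3]$, and only afterwards deduce $\J(3)=\Hh(3)$ from the rank.

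What each route buys:
\begin{itemize}
\item Yours explains structurally why $\frac34$ has multiplicity two (the standard representation is two-dimensional), and it confines all the work to one $2\times2$ block.
\item The paper's also gives $\G(3)=\J(3)$, which supplies the $\dim\G(3)$ entry in Corollary~\ref{cor:dimensions}.
\end{itemize}

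The one soft spot is that you only \emph{expect} $M$ to have eigenvalues $\frac34$ and $0$. Your own trace check, combined with $J_3\subseteq H_3$, closes this without any matrix multiplication:
\begin{itemize}
\item The coefficient of $I$ in $T^\dagger T$ is the sum of the squared coefficients of $T$. This is $\frac14$ for both $P$ and $Q$, so the coefficient of $I$ in $U_3$ is $\frac34$.
\item Hence $\operatorname{tr}(U_3|_{V_3})=6\cdot\frac34=3+0+2\operatorname{tr}M$, which gives $\operatorname{tr}M=\frac34$.
\item Corollary~\ref{cor:multilinear-dimension} and $\J(3)\subseteq\Hh(3)$ give $1+2\operatorname{rank}M=\operatorname{rank}(U_3|_{V_3})=\dim\J(3)\le3$. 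So $\operatorname{rank}M\le1$, and the nonzero trace forces $\operatorname{rank}M=1$.
\end{itemize}
Therefore $M$ has eigenvalues $\frac34$ and $0$, and $\operatorname{rank}U_3=3$. Then $\J(3)=\Hh(3)$ follows, with no circularity.
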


\begin{proof}
For $n=1$, there is only the composition $(1)$ and $T_{(1)}=I$.
Thus $U_1=I$ and $q_1(t)=t-1$.

For $n=2$, the two compositions are $(1,1)$ and $(2)$. Both operators
are equal to $E_2=(I+\rho_2)/2$. Since $E_2^\dagger=E_2$ and
$E_2^2=E_2$, we have $U_2=2E_2$. On $V_2$, the image and the kernel
of $E_2$ both have dimension one. Hence $\chi_2(t)=t(t-2)$.

For $n=3$, put
\begin{align*}
 s_1&=z_1z_2z_3+z_3z_2z_1,\\
 s_2&=z_1z_3z_2+z_2z_3z_1,\\
 s_3&=z_2z_1z_3+z_3z_1z_2.
\end{align*}
These elements form a basis of $\Hh(3)$. Direct expansion gives
\begin{align*}
 (z_1\cdot z_2)\cdot z_3&=\tfrac14(s_1+s_3),\\
 (z_1\cdot z_3)\cdot z_2&=\tfrac14(s_2+s_3),\\
 (z_2\cdot z_3)\cdot z_1&=\tfrac14(s_1+s_2).
\end{align*}
Their coefficient matrix has nonzero determinant. Thus
$\G(3)=\J(3)=\Hh(3)$.

The compositions of three are $(1,1,1)$, $(1,2)$ and $(2,1)$.
Substitution of their defining products into \eqref{eq:U} gives
\[
 [U_3|_{\Hh(3)}]_{s_1,s_2,s_3}
 =\frac34
 \begin{pmatrix}
 2&1&1\\1&2&1\\1&1&2
 \end{pmatrix}.
\]
This matrix has eigenvalue $3$ on $\Q(s_1+s_2+s_3)$ and eigenvalue
$3/4$ on the two-dimensional subspace with coefficient sum zero.
By Lemma~\ref{lem:image}, the image of $U_3$ on $V_3$ is $\Hh(3)$.
Its symmetry implies that it vanishes on the orthogonal complement,
which has dimension three. Therefore,
\[
 \chi_3(t)=t^3(t-3)\left(t-\frac34\right)^2.
\]

In each of these degrees, $\J(n)=\Hh(n)$, and $(I+\rho_n)/2$ is
the orthogonal projection onto this space. By
Remark~\ref{rem:orthogonal}, it equals $\Pi_n$ on $V_n$. The
faithfulness argument in Lemma~\ref{lem:annihilation} proves
\eqref{eq:small-projector} in $\Q[\Sn]$, and hence on every $A_n$.
\end{proof}

\begin{example}[A complete calculation in degree three]\label{ex:degree-three-calculation}
For brevity write $ijk=z_iz_jz_k$. Put
\[
 A=T_{(2,1)}=T_{(1,1,1)},\qquad B=T_{(1,2)}.
\]
Then
\[
 123A=\frac14(123+213+312+321).
\]
The adjoint is obtained by replacing every permutation by its inverse.
For instance, $312^{-1}=231$. Hence
\[
 123A^\dagger=\frac14(123+213+231+321).
\]
Similarly,
\[
 123B=\frac14(123+132+231+321),\qquad
 123B^\dagger=\frac14(123+132+312+321).
\]
A direct multiplication in $\Q[\mathfrak S_3]$ gives
\[
 A^\dagger A=B^\dagger B,
\]
and therefore
\[
 U_3=2A^\dagger A+B^\dagger B=3A^\dagger A.
\]
For the word $123$ one obtains
\[
 123A^\dagger A
 =\frac14(123+321)
 +\frac18(132+213+231+312).
\]

Now put
\[
 s_1=123+321,\qquad s_2=132+231,\qquad s_3=213+312.
\]
The element $s_1$ is Jordan; explicitly,
\[
 s_1=2\bigl((z_1\cdot z_2)\cdot z_3
 +(z_2\cdot z_3)\cdot z_1
 -(z_1\cdot z_3)\cdot z_2\bigr).
\]
The action of $U_3$ on the above basis is
\[
 [U_3]_{s_1,s_2,s_3}
 =\frac34
 \begin{pmatrix}
 2&1&1\\
 1&2&1\\
 1&1&2
 \end{pmatrix}.
\]
Consequently,
\begin{align*}
 s_1U_3
   &=\frac32s_1+\frac34s_2+\frac34s_3,\\
 s_1U_3^2
   &=\frac{27}{8}s_1+\frac{45}{16}s_2+\frac{45}{16}s_3,\\
 s_1U_3^3
   &=\frac{297}{32}s_1+\frac{567}{64}s_2+\frac{567}{64}s_3.
\end{align*}
Since
\[
 q_3(t)=(t-3)\left(t-\frac34\right)^2
 =t^3-\frac92t^2+\frac{81}{16}t-\frac{27}{16},
\]
substitution gives
\[
 s_1q_3(U_3)
 =s_1U_3^3-\frac92s_1U_3^2
 +\frac{81}{16}s_1U_3-\frac{27}{16}s_1=0.
\]
Thus the polynomial criterion recognizes the reversible element
$123+321$ as a Jordan element.
\end{example}

For degree four, define the unnormalized alternation operator
\[
 \Alt_4=\sum_{\sigma\in\mathfrak S_4}\operatorname{sgn}(\sigma)\sigma,
\]
where $\operatorname{sgn}(\sigma)$ is $1$ for an even permutation and
$-1$ for an odd permutation.

\begin{proposition}\label{prop:degree-four}
On $A_4$, the projection onto Jordan elements is
\begin{equation}\label{eq:four-projector}
 \Pi_4=\frac{I+\rho_4}{2}-\frac1{24}\Alt_4.
\end{equation}
Consequently, for $a\in A_4$,
\begin{equation}\label{eq:four-criterion}
 a\in J_4\quad\Longleftrightarrow\quad
 a^*=a\ \text{ and }\ a\Alt_4=0.
\end{equation}
\end{proposition}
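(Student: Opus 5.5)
Since $\Pi_4$ is characterized (Remark~\ref{rem:orthogonal}, together with the faithfulness argument of Lemma~\ref{lem:annihilation}) as the orthogonal projection of $V_4\otimes\R$ onto $\J(4)$, expressed as an element of $\Q[\mathfrak S_4]$, the plan is to identify $\J(4)$ inside $V_4$. We then check that the right-hand side of \eqref{eq:four-projector} is the orthogonal projection onto that space.

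Put $E=(I+\rho_4)/2$ and $F=\frac1{24}\Alt_4$. Both are self-adjoint idempotents in $\Q[\mathfrak S_4]$: indeed $\rho_4^{-1}=\rho_4$, and $\Alt_4^2=24\Alt_4$ with $\Alt_4^\dagger=\Alt_4$. Next, $\rho_4$ is the product of two transpositions, so it is even. Hence $\rho_4\Alt_4=\Alt_4$, which gives $EF=FE=F$. It follows that $E-F$ is a self-adjoint idempotent projecting onto $\{a: a\rho_4=a,\ aF=0\}$, i.e. onto $\Hh(4)\cap\ker\Alt_4$.

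The image of $F$ on $V_4$ is the one-dimensional span of $s=\sum\operatorname{sgn}(\sigma)z_{\sigma(1)}\cdots z_{\sigma(4)}$, and $s^*=s$. Thus $\Hh(4)\cap\ker\Alt_4$ has dimension $12-1=11$.

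It remains to prove $\J(4)=\Hh(4)\cap\ker\Alt_4$. By Table~\ref{tab:low-degree-dimensions} (Corollary~\ref{cor:multilinear-dimension}), $\dim\J(4)=11$, and $\J(4)\subseteq\Hh(4)$. So it suffices to show $\J(4)\Alt_4=0$, i.e. $\J(4)\perp s$. By Corollary~\ref{cor:span}, $\J(4)$ is spanned by the elements $wT_\lambda$ with $\lambda\in\{(1,1,1,1),(2,1,1),(1,2,1),(1,1,2),(2,2)\}$ and $w$ a multilinear word. Each such element is a sum of Jordan products of the form $X\cdot Y$, where $X$ or $Y$ is symmetric under the swap of two letters: in the last step the factor is either a single letter $z$ paired with a degree-three Jordan element, or $z_i\cdot z_j$ paired with $z_k\cdot z_l$. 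I would argue more cleanly using the recursion: an element $b\cdot z$ with $b\in\Hh(3)$ is a combination of terms $uz+zu$ with $u=u^*$. Each reversible degree-three multilinear element is a combination of $pqr+rqp$, and the pairing $r\leftrightarrow p$ corresponds to a transposition of the positions of the corresponding words inside $\pm$ sign bookkeeping. Concretely, $\langle (pqr+rqp)z,s\rangle=\operatorname{sgn}(pqrz)+\operatorname{sgn}(rqpz)=0$, because $pqrz$ and $rqpz$ differ by a transposition. The same argument applies to $z(pqr+rqp)$. For $(z_i\cdot z_j)\cdot(z_k\cdot z_l)$, the expansion contains $ijkl$ and $jikl$ with equal coefficients, and these words again differ by a transposition, so the pairing with $s$ vanishes. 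This yields $\J(4)\subseteq\ker\Alt_4$, and by dimension count, equality.

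Therefore $E-F$ is the orthogonal projection onto $\J(4)$ on $V_4\otimes\R$, and hence equals $\Pi_4$ there. Faithfulness of the action of $\Q[\mathfrak S_4]$ on $V_4$ gives \eqref{eq:four-projector} in the group algebra, hence on every $A_4$. Criterion \eqref{eq:four-criterion} then follows from Theorem~\ref{thm:criterion}, since $a(E-F)=a$ iff $aE=a$ and $aF=0$ (apply $F$ and use $EF=F$). The main obstacle is the dimension input $\dim\J(4)=11$; if one prefers not to rely on the table, the alternative is to verify directly that $11$ of the products $wT_\lambda$ are linearly independent.
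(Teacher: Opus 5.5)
Your overall architecture is the paper's: the idempotents $E=(I+\rho_4)/2$ and $\Alt_4/24$ commute with product $\Alt_4/24$, the space $\Hh(4)\cap\ker\Alt_4$ is $11$-dimensional, $\Pi_4$ is identified with $E-\Alt_4/24$ via Remark~\ref{rem:orthogonal} and faithfulness, and the criterion follows by splitting $a(E-F)=a$ into $aE=a$ and $aF=0$.

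Your proof of the inclusion $\J(4)\subseteq\ker\Alt_4$ is correct but differs from the paper's. You work with the spanning set $V_4T_\lambda$. The last multiplication is either $b\cdot z$ with $b$ a reversible degree-three element, or $(z_i\cdot z_j)\cdot(z_k\cdot z_l)$. In both cases the words pair off into words differing by one position transposition, and each pair is orthogonal to $\mathfrak a_4$. The paper argues differently: every multilinear Jordan monomial contains an innermost product $z_i\cdot z_j$, so it is fixed by the label swap $i\leftrightarrow j$, and hence its label alternation vanishes. On $V_4$ that label alternation coincides with $\Alt_4$. The paper's argument applies verbatim in every degree. Yours uses that reversing a three-letter block is a single transposition, i.e.\ that $\rho_3$ is odd. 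That fails for $\rho_4$, so your argument is specific to degree four.

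The real weakness is the lower bound $\dim\J(4)\ge 11$, which is the substantive half of $\J(4)=\Hh(4)\cap\ker\Alt_4$. You take it from Table~\ref{tab:low-degree-dimensions}. That table is the output of a machine rank computation for $U_4$. Corollary~\ref{cor:multilinear-dimension} only reduces the question to computing $e_4$; it does not compute it. As written, the proposition therefore rests on unproved numerical data, while the part you actually prove is the easy inclusion. The paper closes this step exactly as you suggest at the end. It lists the eleven elements $L_{1234},\dots,L_{2413}$, $B_{12\mid34}$, $B_{13\mid24}$ and checks that their coefficient matrix on eleven chosen word columns has determinant $32$. Once you supply that, or any other explicit set of eleven independent multilinear Jordan elements, your proof is complete.
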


\begin{proof}
We first work on $V_4$. Every multilinear Jordan monomial has a pair
of generators which are multiplied together before either is
multiplied by any other factor. Interchanging these two generators
leaves the monomial unchanged, by commutativity of the Jordan product.
Its full alternation in the variable labels is therefore zero.

On $V_4$, full alternation in the labels agrees with the position
operator $\Alt_4$. To see this, put
\[
 \mathfrak a_4=\sum_{\sigma\in\mathfrak S_4}
   \operatorname{sgn}(\sigma)z_{\sigma(1)}z_{\sigma(2)}
                              z_{\sigma(3)}z_{\sigma(4)}.
\]
Both operations send a basis word $z_{\pi(1)}\cdots z_{\pi(4)}$ to
$\operatorname{sgn}(\pi)\mathfrak a_4$. Consequently,
\begin{equation}\label{eq:four-inclusion}
 \J(4)\subseteq\Hh(4)\cap\ker\Alt_4.
\end{equation}

The reversal permutation is $\rho_4=(1\ 4)(2\ 3)$ and is even.
Thus $\mathfrak a_4$ is reversible. Moreover,
$\mathfrak a_4\Alt_4=24\mathfrak a_4$, and the image of $\Alt_4$ on
$V_4$ is the line $\Q\mathfrak a_4$. No multilinear word of degree
four equals its reversal, so $\dim\Hh(4)=24/2=12$. It follows that
\begin{equation}\label{eq:four-upper}
 \dim\bigl(\Hh(4)\cap\ker\Alt_4\bigr)=11.
\end{equation}

We now exhibit eleven independent elements of $\J(4)$. Write
\[
 L_{ijkl}=((z_i\cdot z_j)\cdot z_k)\cdot z_l,
 \qquad
 B_{ij\mid kl}=(z_i\cdot z_j)\cdot(z_k\cdot z_l).
\]
Consider, in the stated order, the elements
\begin{equation}\label{eq:eleven}
\begin{gathered}
 L_{1234},\ L_{1243},\ L_{1324},\ L_{1342},\ L_{1423},\ L_{1432},\\
 L_{2314},\ L_{2341},\ L_{2413},\ B_{12\mid34},\ B_{13\mid24}.
\end{gathered}
\end{equation}
Multiply each by $8$ and take its coefficients at the word columns
\begin{equation}\label{eq:columns}
 1243,\ 1324,\ 1342,\ 1423,\ 1432,\ 2134,\ 2143,\
 2314,\ 2413,\ 3124,\ 3214,
\end{equation}
where $ijkl$ denotes $z_iz_jz_kz_l$. The resulting matrix is
\begin{equation}\label{eq:minor}
 M=\begin{pmatrix}
 0&0&0&0&0&1&0&0&0&1&1\\
 1&0&0&0&0&0&1&0&0&1&1\\
 0&1&0&0&0&1&0&1&0&1&0\\
 0&0&1&0&0&1&0&1&1&0&0\\
 0&0&0&1&0&0&1&0&1&0&1\\
 0&0&0&0&1&0&1&1&1&0&0\\
 0&1&0&0&0&0&0&1&0&0&1\\
 0&1&0&1&1&0&0&0&0&0&0\\
 1&0&0&1&0&0&0&0&1&1&0\\
 1&0&0&0&0&1&1&0&0&0&0\\
 0&1&1&0&0&0&0&0&1&1&0
 \end{pmatrix},
 \qquad \det M=32.
\end{equation}
Hence the eleven elements in \eqref{eq:eleven} are independent. Together
with \eqref{eq:four-inclusion} and \eqref{eq:four-upper}, this proves
\begin{equation}\label{eq:four-equality}
 \J(4)=\Hh(4)\cap\ker\Alt_4,
 \qquad\dim\J(4)=11.
\end{equation}

Set $E=(I+\rho_4)/2$ and $Q=\Alt_4/24$. Since
\[
 \rho_4^2=I,\qquad \Alt_4^\dagger=\Alt_4,\qquad
 \Alt_4^2=24\Alt_4,
\]
both $E$ and $Q$ are symmetric idempotents. The evenness of $\rho_4$
gives $EQ=QE=Q$. It follows that $E-Q$ is the orthogonal projection
onto $\Hh(4)\cap\ker\Alt_4$. By \eqref{eq:four-equality} and
Remark~\ref{rem:orthogonal}, $E-Q$ and $\Pi_4$ are orthogonal
projections onto the same space, so they are equal on $V_4$.
Faithfulness gives equality in $\Q[\mathfrak S_4]$. This proves
\eqref{eq:four-projector} on every $A_4$.

Finally, $a(E-Q)=a$ implies $aE=a$ and $aQ=0$, by multiplying by
$E$ and $Q$, respectively. Conversely, these two equalities imply
$a(E-Q)=a$. Since $aE=a$ is equivalent to $a^*=a$ and $aQ=0$ is
equivalent to $a\Alt_4=0$, Theorem~\ref{thm:criterion} gives
\eqref{eq:four-criterion}.
\end{proof}

\begin{example}[A complete calculation in degree four]\label{ex:degree-four-calculation}
Write $ijkl=z_iz_jz_kz_l$ and consider
\[
 a=1234+4321+1243+3421.
\]
This element is reversible, since the first and second terms, and the
third and fourth terms, are reverse pairs. Moreover,
\[
 a\Alt_4=(1+1-1-1)\mathfrak a_4=0.
\]
Hence Proposition~\ref{prop:degree-four} already implies $a\in\J(4)$.
One may also write it explicitly as a Jordan polynomial:
\begin{equation}\label{eq:degree-four-jordan-expression}
 a=4\bigl(B_{12\mid34}+L_{3421}-L_{3412}\bigr).
\end{equation}
Indeed, expansion of the three Jordan products on the right gives
exactly the four associative words occurring in $a$.

We now apply the polynomial operator criterion directly.  Put
\[
\begin{array}{lll}
 s_1=1234+4321, & s_2=1243+3421, & s_3=1324+4231,\\
 s_4=1342+2431, & s_5=1423+3241, & s_6=1432+2341,\\
 s_7=2134+4312, & s_8=2143+3412, & s_9=2314+4132,\\
 s_{10}=2413+3142, & s_{11}=3124+4213, & s_{12}=3214+4123.
\end{array}
\]
Thus $a=s_1+s_2$.

Let
\[
 A=T_{(2,1,1)},\qquad B=T_{(1,2,1)},\qquad C=T_{(2,2)}.
\]
The remaining two compositions give
$T_{(1,1,1,1)}=A$ and $T_{(1,1,2)}=C$. Hence
\[
 U_4=2A^\dagger A+B^\dagger B+2C^\dagger C.
\]
Exact multiplication in $\Q[\mathfrak S_4]$ gives
\[
 B^\dagger B=A^\dagger A,\qquad C^\dagger=C,\qquad C^2=C,
\]
so that
\begin{equation}\label{eq:U4-simplified}
 U_4=3A^\dagger A+2C.
\end{equation}
For example,
\begin{align*}
 1234A
 &=\frac18(1234+2134+3124+3214+4123+4213+4312+4321),\\
 1234A^\dagger
 &=\frac18(1234+2134+2314+3214+2341+3241+3421+4321).
\end{align*}
Here, for instance, $3124^{-1}=2314$ and $4123^{-1}=2341$.

Using \eqref{eq:U4-simplified}, one obtains successively
\begin{align}
 aU_4={}&
 \frac{31}{16}(s_1+s_2)
 +\frac9{16}(s_3+s_4+s_5+s_6)
 +\frac{19}{16}(s_7+s_8) \notag\\
 &+\frac3{16}(s_9+s_{10})
 +\frac9{16}(s_{11}+s_{12}),
 \label{eq:aU4}\\[1mm]
 aU_4^2={}&
 \frac{787}{128}(s_1+s_2)
 +\frac{27}{8}(s_3+s_5)
 +\frac{231}{64}(s_4+s_6) \notag\\
 &+\frac{697}{128}(s_7+s_8)
 +\frac{45}{16}(s_9+s_{10})
 +\frac{231}{64}(s_{11}+s_{12}),
 \label{eq:aU4sq}
\end{align}
\begin{align}
 aU_4^3={}&
 \frac{12731}{512}(s_1+s_2)
 +\frac{19191}{1024}(s_3+s_5)
 +\frac{5019}{256}(s_4+s_6) \notag\\
 &+\frac{12353}{512}(s_7+s_8)
 +\frac{18489}{1024}(s_9+s_{10})
 +\frac{5019}{256}(s_{11}+s_{12}),
 \label{eq:aU4cube}
\end{align}
\begin{align}
 aU_4^4={}&
 \frac{924145}{8192}(s_1+s_2)
 +\frac{101451}{1024}(s_3+s_5)
 +\frac{103851}{1024}(s_4+s_6) \notag\\
 &+\frac{917503}{8192}(s_7+s_8)
 +\frac{100641}{1024}(s_9+s_{10})
 +\frac{103851}{1024}(s_{11}+s_{12}),
 \label{eq:aU4four}\\[1mm]
 aU_4^5={}&
 \frac{17688863}{32768}(s_1+s_2)
 +\frac{33353019}{65536}(s_3+s_5)
 +\frac{2108019}{4096}(s_4+s_6) \notag\\
 &+\frac{17659217}{32768}(s_7+s_8)
 +\frac{33294213}{65536}(s_9+s_{10})
 +\frac{2108019}{4096}(s_{11}+s_{12}).
 \label{eq:aU4five}
\end{align}

The exact characteristic polynomial of $U_4$ on $V_4$ is
\[
 \det(tI-U_4)
 =t^{13}(t-5)\left(t-\frac12\right)^2
 \left(t-\frac94\right)^2
 \left(t-\frac98\right)^3
 \left(t-\frac38\right)^3.
\]
Since $U_4$ is symmetric, it is diagonalizable.  Thus, in degree four,
the square-free polynomial
\[
 p_4(t)=(t-5)\left(t-\frac12\right)
 \left(t-\frac94\right)
 \left(t-\frac98\right)
 \left(t-\frac38\right)
\]
has the same kernel criterion as the nonzero factor $q_4(t)$.  Expanding,
\[
 p_4(t)=t^5-\frac{37}{4}t^4+\frac{1723}{64}t^3
 -\frac{7989}{256}t^2+\frac{7533}{512}t-\frac{1215}{512}.
\]
Substitution of \eqref{eq:aU4}--\eqref{eq:aU4five} gives
\begin{align*}
 a p_4(U_4)={}&aU_4^5-\frac{37}{4}aU_4^4
 +\frac{1723}{64}aU_4^3-\frac{7989}{256}aU_4^2\\
 &+\frac{7533}{512}aU_4-\frac{1215}{512}a=0.
\end{align*}
For instance, the coefficient of $s_1$ in the last expression is
\[
 \frac{17688863}{32768}
 -\frac{37}{4}\;\frac{924145}{8192}
 +\frac{1723}{64}\;\frac{12731}{512}
 -\frac{7989}{256}\;\frac{787}{128}
 +\frac{7533}{512}\;\frac{31}{16}
 -\frac{1215}{512}=0,
\]
and the remaining coefficients vanish in the same way.  Hence
$a p_4(U_4)=0$, and therefore also $a q_4(U_4)=0$.
\end{example}

\begin{corollary}\label{cor:dimensions}
The multilinear dimensions in degrees at most four are
\[
\begin{array}{c|rrr}
 n&\dim\G(n)&\dim\J(n)&\dim\Hh(n)\\\hline
 1&1&1&1\\
 2&1&1&1\\
 3&3&3&3\\
 4&9&11&12
\end{array}
\]

\end{corollary}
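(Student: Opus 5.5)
The table has three columns, and the plan is to settle them in turn: $\dim\Hh(n)$, then $\dim\J(n)$, then $\dim\G(n)$. Only the last column needs new work; the other two follow from results already proved.

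For $\Hh(n)$, the reversal $\rho_n$ permutes the multilinear words. A word is fixed by $\rho_n$ only when $n=1$. So $\dim\Hh(1)=1$, and for $n\ge2$ we get $\dim\Hh(n)=n!/2$, giving $1,3,12$ for $n=2,3,4$. For $\J(n)$, Proposition~\ref{prop:small} gives $\J(n)=\Hh(n)$ when $n\le3$, and \eqref{eq:four-equality} gives $\dim\J(4)=11$.

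For $\G(n)$, I would use the fact that $\G(n)$ is spanned by the multilinear left-normed products $z_{i_1}\cdot z_{i_2}\cdots z_{i_n}$.
\begin{itemize}
\item For $n=1,2$ the spanning set is $z_1$, respectively $z_1\cdot z_2$, so the dimension is $1$.
\item For $n=3$, the proof of Proposition~\ref{prop:small} already shows that the three products $(z_i\cdot z_j)\cdot z_k$ are independent and span $\Hh(3)$. Hence $\dim\G(3)=3$.
\item For $n=4$, commutativity of the first two factors reduces the spanning set to the twelve elements $L_{ijkl}$ with $i<j$. This gives $\dim\G(4)\le12$, and also $\le11$ since $\G(4)\subseteq\J(4)$.
\end{itemize}

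So the degree-four claim $\dim\G(4)=9$ needs a lower bound and an upper bound. For the lower bound $\ge9$, I would take the nine $L$'s listed in \eqref{eq:eleven}. In the matrix $M$ of \eqref{eq:minor}, they are the first nine rows. Since $\det M\ne0$, those rows are independent, so $\dim\G(4)\ge9$.

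The upper bound $\le9$ is the main obstacle: I must find three independent linear relations among the twelve $L_{ijkl}$. The plan is to start from Lemma~\ref{lem:operator} applied to $z_l$ with $a,b,c=z_i,z_j,z_k$. Taking $R_{(z_i\cdot z_j)\cdot z_k}$ of $z_l$ expresses $z_l\cdot((z_i\cdot z_j)\cdot z_k)=L_{ijkl}$ in terms of $B$-type terms $z_l\cdot(z_i\cdot z_j)\cdot z_k$, which are again left-normed, together with cubic operator terms that are also left-normed. These are the standard linearized Jordan-identity relations among left-normed monomials. I would first try symmetrizing this identity over $l$, or applying it to the linearized Jordan identity with one variable in a distinguished role, to produce three relations. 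Their independence would then be verified by a direct coefficient computation on a few words, in the style of \eqref{eq:columns}.

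If that fails, the fallback is direct computation. I would expand all twelve $L_{ijkl}$ in the basis $s_1,\dots,s_{12}$ of $\Hh(4)$ from Example~\ref{ex:degree-four-calculation}, and show that the resulting $12\times12$ coefficient matrix has rank $9$ by exhibiting three explicit null combinations. This is routine but must be done exactly. Combined with the lower bound, it gives $\dim\G(4)=9$ and completes the table.
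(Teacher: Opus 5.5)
Your handling of the $\Hh$ and $\J$ columns and of $\G(n)$ for $n\le3$ is correct, and so is the lower bound $\dim\G(4)\ge9$: the first nine rows of $M$ are independent because $\det M\ne0$. This is how those entries follow from Propositions~\ref{prop:small} and~\ref{prop:degree-four}. The paper states the corollary without proof. The value $\dim\G(4)=9$ appears only in the introduction, next to Robbins' dimension formulas. So the upper bound $\dim\G(4)\le9$ is the one step you must supply yourself, and there your proposal stops at a plan (``I would first try\dots; if that fails\dots''). As written, the three relations and their independence are never produced, so the entry $\dim\G(4)=9$ is not yet proved. This is a gap of execution rather than of ideas: the identity you already set up settles it, with no symmetrization needed.

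Apply \eqref{eq:operator} to $z_l$ with $(a,b,c)=(z_i,z_j,z_k)$, and use $L_{ijkl}=L_{jikl}$. Every term is left-normed; the terms you called ``$B$-type'' are $L$'s, and no $B_{ij\mid kl}$ occurs. One gets
\[
 L_{ijkl}=L_{ijlk}+L_{iklj}+L_{jkli}-L_{ilkj}-L_{jlki}.
\]
Let $S_k$ be the sum of the three left-normed products whose third factor is $z_k$. The left side and the two subtracted terms have $z_k$ in third position, and the other three have $z_l$ there. So the relation is exactly $S_k=S_l$. A direct expansion confirms this: $8S_k=\sum_{\sigma\in\mathfrak S_4}z_{\sigma(1)}z_{\sigma(2)}z_{\sigma(3)}z_{\sigma(4)}$ for every $k$.

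The four $S_k$ are supported on disjoint triples of the twelve $L$'s. Hence $S_1=S_2=S_3=S_4$ are three independent linear relations, and $\dim\G(4)\le9$. In fact, the three products missing from \eqref{eq:eleven} are those with last two indices $(1,2)$, $(2,1)$, $(3,1)$. They lie in $S_1$, $S_2$, $S_3$ respectively, so the relations $S_k=S_4$ eliminate them, and the nine $L$'s of \eqref{eq:eleven} form a basis of $\G(4)$. With this step written in, your argument is complete. It is also more self-contained than the paper, which gives no argument for this entry.
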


\begin{example}
Suppose that $x_1,x_2,x_3,x_4\in X$ are distinct, and consider the tetrad
\[
 h=x_1x_2x_3x_4+x_4x_3x_2x_1.
\]
It is reversible, but
\[
 h\Alt_4=2\sum_{\sigma\in\mathfrak S_4}\operatorname{sgn}(\sigma)
                  x_{\sigma(1)}x_{\sigma(2)}x_{\sigma(3)}x_{\sigma(4)}
 \ne0.
\]
The last inequality follows from the linear independence of the
distinct associative words. Hence $h\notin J_4$. Its Jordan projection is
\[
 h\Pi_4=h-\frac1{12}
 \sum_{\sigma\in\mathfrak S_4}\operatorname{sgn}(\sigma)
                  x_{\sigma(1)}x_{\sigma(2)}x_{\sigma(3)}x_{\sigma(4)}.
\]
This example also shows why reversibility alone is insufficient for
recognizing Jordan elements.
\end{example}

\section{Multihomogeneous components and computation}

The computations used below are exact: the matrices of the operators
$T_\lambda$ and $U_n$ have rational entries, and characteristic
polynomials, ranks and nullities are computed over $\mathbb Q$. The
implementation follows the construction literally. A multilinear word is
encoded by a permutation of $1,\ldots,n$; Jordan products are expanded in
the associative word basis; the matrices $T_\lambda$ are assembled; and
$U_n$ is formed as the sum of the Gram matrices
$T_\lambda^\dagger T_\lambda$. No floating-point approximation is used.

For the universal criterion in degree $n$, one performs the following
steps:
\begin{enumerate}
\item List the compositions $\lambda\in\mathcal C_n$.
\item Expand the products defining $T_\lambda$ on the $n!$ basis words
of $V_n$ and form their rational matrices.
\item Form $U_n=\sum_\lambda T_\lambda^\dagger T_\lambda$, using matrix
transposes for $\dagger$.
\item Compute $\chi_n(t)=\det(tI-U_n)$ and remove its maximal power of
$t$, obtaining $q_n(t)$.
\item For an element $a\in A_n$, compute $a\,q_n(U_n)$, or equivalently
$a\Pi_n$.
\end{enumerate}
The polynomial obtained in this way applies to every alphabet and to
words with arbitrary repetitions. Neither a basis nor the dimension
of $J_n$ is required as input.

For a smaller computation, fix distinct generators $x_1,\ldots,x_m$ and
nonnegative integers $\alpha_1,\ldots,\alpha_m$ with sum $n$. Let
$W_{\alpha,\Q}$ be the rational span of words in which $x_i$ occurs
exactly $\alpha_i$ times. This is the multihomogeneous component of
multidegree $\alpha=(\alpha_1,\ldots,\alpha_m)$, and
\[
 \dim_\Q W_{\alpha,\Q}
 =\frac{n!}{\alpha_1!\cdots\alpha_m!}.
\]
Put $W_\alpha=W_{\alpha,\Q}\otimes_\Q\Phi$. Every position permutation
preserves this space, so the operators $T_\lambda$, their transposes
and $U_n$ restrict to it. Write
\[
 \det(tI-U_n|_{W_{\alpha,\Q}})
 =t^{e_\alpha}q_\alpha(t),\qquad q_\alpha(0)\ne0.
\]

\begin{proposition}\label{prop:multihomogeneous}
For $a\in W_\alpha$,
\[
 a\in J\cap W_\alpha
 \quad\Longleftrightarrow\quad a\,q_\alpha(U_n)=0.
\]
The operator
\[
 I-\frac{q_\alpha(U_n)}{q_\alpha(0)}
\]
is a projection of $W_\alpha$ onto $J\cap W_\alpha$.
\end{proposition}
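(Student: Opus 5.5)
The plan is to repeat the argument of Theorem~\ref{thm:criterion} with $V_n$ replaced by $W_\alpha$. Two ingredients are needed: the image identity $W_\alpha U_n=J\cap W_\alpha$, and the annihilation identity $U_nq_\alpha(U_n)=0$ on $W_\alpha$. Note that the latter is only asserted on $W_\alpha$, not in $\Q[\Sn]$; faithfulness fails for repeated letters, and it is not needed.

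\textbf{Image identity.} Every position permutation preserves $W_\alpha$, so every $T_\lambda$ and $U_n$ preserves it. Since $A_n=\bigoplus_\beta W_\beta$ over multidegrees $\beta$ on the finitely many letters involved, we get $A_nT_\lambda=\bigoplus_\beta W_\beta T_\lambda$. The space $J_n$ is spanned by the images $wT_\lambda$ of single words, and each $wT_\lambda$ lies in the component of $w$. Hence
\[
J\cap W_\alpha=\sum_{\lambda}W_\alpha T_\lambda,
\]
because $J_n$ is a direct sum of its multihomogeneous parts. Lemma~\ref{lem:image} gives $A_nU_n=J_n$; projecting onto the $\alpha$-component, and using that $U_n$ commutes with these projections, gives $W_\alpha U_n=J\cap W_\alpha$.

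\textbf{Annihilation on $W_\alpha$.} I will redo the spectral argument of Lemma~\ref{lem:annihilation} on the finite space $W_{\alpha,\Q}\otimes\R$. Declare the words of $W_\alpha$ orthonormal. The paper already noted that $\dagger$ is the transpose even with repeated letters, since a position permutation permutes words. Hence $U_n|_{W_\alpha}$ is real symmetric, and therefore diagonalizable. Its nonzero eigenvalues are roots of $q_\alpha$, so $tq_\alpha(t)$ annihilates $U_n|_{W_{\alpha,\Q}\otimes\R}$. Because the matrices are rational, the identity also holds on $W_{\alpha,\Q}$ and hence on $W_\alpha$ after extending scalars to $\Phi$.

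\textbf{Criterion.} The rest copies the proof of Theorem~\ref{thm:criterion}.
\begin{enumerate}
\item If $a\in J\cap W_\alpha$, write $a=bU_n$ with $b\in W_\alpha$; then $aq_\alpha(U_n)=bU_nq_\alpha(U_n)=0$.
\item Conversely, write $q_\alpha(t)=q_\alpha(0)+tr(t)$. If $aq_\alpha(U_n)=0$, then $a=\bigl(-ar(U_n)/q_\alpha(0)\bigr)U_n$ with $ar(U_n)\in W_\alpha$, so $a\in W_\alpha U_n=J\cap W_\alpha$.
\item The projection claim follows as before. $\Pi=I-q_\alpha(U_n)/q_\alpha(0)=-r(U_n)U_n/q_\alpha(0)$ maps $W_\alpha$ into $J\cap W_\alpha$ and is the identity there, so $\Pi^2=\Pi$.
\end{enumerate}

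The only point needing care is the image identity: the argument must not rely on faithfulness or on the multilinear rank computation. The decomposition argument above handles this directly. If one prefers to avoid Lemma~\ref{lem:image}, the positive semidefinite argument of that lemma can be run on $W_\alpha$ itself, giving $\im U_n|_{W_\alpha}=\sum_\lambda W_\alpha T_\lambda$ over $\R$, and then transferring ranks to $\Phi$ exactly as in that proof.
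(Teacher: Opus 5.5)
Your proposal is correct and follows the paper's proof: the multihomogeneous part of the span identity, the image identity $W_\alpha U_n=J\cap W_\alpha$, the spectral annihilation $U_nq_\alpha(U_n)=0$ on $W_{\alpha,\Q}\otimes_\Q\R$ transferred to $\Phi$, and then the argument of Theorem~\ref{thm:criterion}. The only variation is in the image identity. You obtain it by projecting Lemma~\ref{lem:image} onto the $\alpha$-component, which is valid because every element of $\Q[\Sn]$ preserves each $W_\beta$; the paper instead reruns the inner-product and rank argument on $W_\alpha$ directly, which is the alternative you mention at the end.
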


\begin{proof}
Jordan monomials are multihomogeneous, and each $T_\lambda$ preserves
multidegree. Taking the multidegree-$\alpha$ part of \eqref{eq:span}
therefore gives
\[
 J\cap W_\alpha
 =\sum_{\lambda\in\mathcal C_n}W_\alpha T_\lambda.
\]
The inner product and rank argument of Lemma~\ref{lem:image}, applied
to the word basis of $W_\alpha$, shows that this sum is $W_\alpha U_n$.
On $W_{\alpha,\Q}\otimes_\Q\R$, the restriction of $U_n$ is symmetric
and hence diagonalizable. Its characteristic polynomial gives
$U_nq_\alpha(U_n)=0$ on this space, and the same rational identity
holds over $\Phi$.

If $a=bU_n$, this identity gives $a\,q_\alpha(U_n)=0$. Conversely,
write $q_\alpha(t)=q_\alpha(0)+tr_\alpha(t)$. If
$a\,q_\alpha(U_n)=0$, then
\[
 a=\left(-\frac{ar_\alpha(U_n)}{q_\alpha(0)}\right)U_n
 \in J\cap W_\alpha.
\]
The displayed operator is a polynomial multiple of $U_n$ and acts
as the identity on $J\cap W_\alpha$. It is therefore the asserted
projection.
\end{proof}

\begin{remark}
The same argument applies to the full degree-$n$ component on any fixed
finite alphabet, using the characteristic polynomial on that component.
The universal polynomial $q_n$ has the advantage that it is computed
once and works for all alphabets. The restricted polynomial can be
obtained from a smaller matrix when a particular multidegree is fixed.
These are finite procedures, but no assertion of efficiency in large
degrees or closed spectral formula is needed for the criterion.
\end{remark}

\begin{example}
Let \(x,y\in X\) be distinct generators. Consider the
multihomogeneous component
\[
W=\operatorname{span}_{\Phi}\{w_0,w_1,w_2,w_3,w_4\},
\qquad
w_i=x^iyx^{4-i}.
\]
Every position permutation preserves \(W\). We may therefore
apply the componentwise version of the criterion to
\(U=U_5|_W\).

In the ordered basis \(w_0,w_1,w_2,w_3,w_4\), direct expansion
of the operators \(T_\lambda\), \(\lambda\in\mathcal C_5\), gives
\[
U=\frac1{128}
\begin{pmatrix}
293&152&134&152&293\\
152&256&208&256&152\\
134&208&340&208&134\\
152&256&208&256&152\\
293&152&134&152&293
\end{pmatrix}.
\]
Its characteristic polynomial is
\[
\det(tI-U)
=
\frac{t^2}{64}(t-8)(64t^2-207t+135).
\]
Removing the zero factor and clearing denominators, put
\[
p(t)
=
(t-8)(64t^2-207t+135)
=
64t^3-719t^2+1791t-1080.
\]
Since \(p(0)=-1080\ne0\), the criterion becomes
\[
a\in J_5\cap W
\quad\Longleftrightarrow\quad
ap(U)=0.
\]

For \(a=\sum_{i=0}^{4}c_iw_i\), matrix multiplication gives
\[
ap(U)
=
-540\bigl(
(c_0-c_4)(w_0-w_4)
+
(c_1-c_3)(w_1-w_3)
\bigr).
\]
Consequently,
\[
a\in J_5\cap W
\quad\Longleftrightarrow\quad
c_0=c_4,\qquad c_1=c_3.
\]
In particular, the elements
\[
\begin{aligned}
f_1&=x^4y+yx^4,\\
f_2&=x^3yx+xyx^3,\\
f_3&=x^2yx^2
\end{aligned}
\]
satisfy
\[
f_1p(U)=f_2p(U)=f_3p(U)=0.
\]
Thus \(f_1,f_2,f_3\in J_5\). Moreover,
\[
J_5\cap W
=
\operatorname{span}_{\Phi}\{f_1,f_2,f_3\}.
\]

The corresponding projection is
\[
\Pi
=
I-\frac{p(U)}{p(0)}
=
\frac{64U^3-719U^2+1791U}{1080}.
\]
Explicitly,
\[
a\Pi
=
\frac{c_0+c_4}{2}(w_0+w_4)
+
\frac{c_1+c_3}{2}(w_1+w_3)
+
c_2w_2
=
\frac{a+a^*}{2}.
\]
Hence, on this component, the polynomial criterion is
equivalent to reversibility.

\end{example}

\newpage

\appendix
\section{Explicit forms of \texorpdfstring{$q_n$}{q n} in degrees five, six and seven}
\label{app:qn-low-degrees}

For reference we record the exact factorizations of the nonzero part of
the characteristic polynomial in degrees $5$, $6$, and $7$. The notation
$U_n$, $V_n$, $e_n$, and $q_n$ is that of Section~4. Thus
\[
 \det(tI-U_n|_{V_n})=t^{e_n}q_n(t),\qquad q_n(0)\ne0,
\]
and Theorem~\ref{thm:criterion} gives
\[
 a\in J_n\quad\Longleftrightarrow\quad a\,q_n(U_n)=0.
\]
The factorizations below were obtained by exact rational computation with
the implementation described in Section~6.

\subsection{Degree five}

In degree $5$ one has
\[
\dim V_5=5!=120,\qquad
e_5=65,\qquad
\dim(J_5\cap V_5)=55.
\]
Hence
\[
\det(tI-U_5)=t^{65}q_5(t),
\]
where
\[
{
\begin{aligned}
q_5(t)={}&
(t-8)
\left(t-\frac{9}{64}\right)^4
\left(t-\frac{27}{32}\right)^5
\\
&\times
\left(
t^2-\frac{27}{32}t+\frac{135}{1024}
\right)^6
\\
&\times
\left(
t^2-\frac{207}{64}t+\frac{135}{64}
\right)^4
\\
&\times
\left(
t^2-\frac{117}{32}t+\frac{27}{16}
\right)^5
\\
&\times
\left(
t^3-\frac{35}{16}t^2
+\frac{1287}{1024}t
-\frac{135}{1024}
\right)^5 .
\end{aligned}}
\tag{A.1}
\]
Thus an arbitrary multilinear associative polynomial
\[
a=\sum_{\sigma\in S_5}
c_\sigma x_{\sigma(1)}x_{\sigma(2)}x_{\sigma(3)}
x_{\sigma(4)}x_{\sigma(5)}
\]
is a Jordan element if and only if
\[
a\,q_5(U_5)=0.
\]
The degree of $q_5$ is
\[
1+4+5+2\cdot6+2\cdot4+2\cdot5+3\cdot5=55,
\]
as required.

\subsection{Degree six}

For degree $6$,
\[
\dim V_6=6!=720,\qquad
e_6=390,\qquad
\dim(J_6\cap V_6)=330.
\]
It is convenient to scale the variable and put
\[
x=1024\,t.
\]
Then
\[
q_6(t)=1024^{-330}Q_6(1024t),
\tag{A.2}
\]
where
\[
\begin{aligned}
Q_6(x)={}&
(x-13312)
\\
&\times
(x^2-6104x+6689536)^5
\\
&\times
\Bigl(
x^6-10984x^5+41608960x^4-72789166080x^3
\\
&\hspace{13mm}
+62315710906368x^2
-23910737862721536x
+2782794916219060224
\Bigr)^9
\\
&\times(x-480)^{10}
\\
&\times
\bigl(
x^3-2080x^2+1148160x-125632512
\bigr)^{10}
\\
&\times(x-1600)^5
\\
&\times
\Bigl(
x^8-5432x^7+11552656x^6-12552711168x^5
\\
&\hspace{8mm}
+7600379092992x^4
-2601626141786112x^3
\\
&\hspace{8mm}
+480873007105966080x^2
-41925890740511047680x
\\
&\hspace{8mm}
+1229842988991114117120
\Bigr)^{16}
\\
&\times
\bigl(
x^4-816x^3+209664x^2-18413568x+477757440
\bigr)^{10}
\\
&\times
\bigl(
x^4-4192x^3+3515136x^2-928309248x
+69306679296
\bigr)^5
\\
&\times
\bigl(
x^3-760x^2+161280x-8110080
\bigr)^9
\\
&\times(x-72)^5.
\end{aligned}
\tag{A.3}
\]
Consequently
\[
\det(tI-U_6)=t^{390}q_6(t),
\]
and the degree-six criterion takes the concrete form
\[
{\;
a\in J_6
\quad\Longleftrightarrow\quad
a\,q_6(U_6)=0.
\;}
\]
The total degree of $Q_6$, counted with multiplicities, is $330$.

\subsection{Degree seven}

For degree $7$,
\[
\dim V_7=7!=5040,\qquad
e_7=2695,\qquad
\dim(J_7\cap V_7)=2345.
\]
Set
\[
x=4096\,t.
\]
Then
\[
{
q_7(t)=4096^{-2345}Q_7(4096t),
}
\tag{A.4}
\]
where the factorization of $Q_7$ is most conveniently indexed by the
partitions of $7$:
\[
\begin{aligned}
Q_7(x)={}&
P_{7}(x)
P_{61}(x)^6
P_{52}(x)^{14}
P_{511}(x)^{15}
P_{43}(x)^{14}
\\
&\times P_{421}(x)^{35}
P_{4111}(x)^{20}
P_{331}(x)^{21}
P_{322}(x)^{21}
\\
&\times P_{3211}(x)^{35}
P_{31111}(x)^{15}
P_{2221}(x)^{14}
\\
&\times P_{22111}(x)^{14}
P_{211111}(x)^6 .
\end{aligned}
\tag{A.5}
\]

The factors are as follows:
\[
P_7(x)=x-86016,
\]
\[
P_{61}(x)
=
x^3-49496x^2+541322240x-1352925708288,
\]
\[
\begin{aligned}
P_{52}(x)={}&
x^8-79816x^7+2329680080x^6-33206248159744x^5\\
&+254474421624225792x^4
-1079172419625854959616x^3\\
&+2484231989065410630647808x^2\\
&-2845614610381868673099890688x\\
&+1253519919015703047844134912000,
\end{aligned}
\]
\[
\begin{aligned}
P_{511}(x)={}&
x^6-21720x^5+169077968x^4-608380011520x^3\\
&+1070258623741952x^2
-888784492116836352x\\
&+277480100635053391872,
\end{aligned}
\]
\[
\begin{aligned}
P_{43}(x)={}&
x^7-41872x^6+653899904x^5-4862297878528x^4\\
&+18897961280864256x^3
-38838890033586372608x^2\\
&+39005199830166886416384x\\
&-14362235308432844591726592,
\end{aligned}
\]
\[
\begin{aligned}
P_{421}(x)={}&
x^{18}-59072x^{17}+1545463872x^{16}
-23797881934848x^{15}\\
&+241633859827180288x^{14}
-1716918621274818105344x^{13}\\
&+8840086394303613380378624x^{12}\\
&-33690828411159096433796186112x^{11}\\
&+96250574166383640973814853009408x^{10}\\
&-207464682596911229989233790392729600x^9\\
&+337893201659296269939680519651428663296x^8\\
&-414506456555114019416990641760378044809216x^7\\
&+379871585795794458867800746929187083327111168x^6\\
&-256350964578629179594101164586420647244255461376x^5\\
&+124483096519783775372903702963361676612808315240448x^4\\
&-41933982539817335140134073493373249570990072533090304x^3\\
&+9219331457309142684030953122871673034087089549726449664x^2\\
&-1179360269105858313196679114164892482900910450887907868672x\\
&+66003582649287752868308905735264108319596618600596123942912,
\end{aligned}
\]
\[
\begin{aligned}
P_{4111}(x)={}&
x^8-8392x^7+28012032x^6-47747026944x^5\\
&+44660070862848x^4-23182965188591616x^3\\
&+6543331562895704064x^2
-924237723683858153472x\\
&+50164511550822584156160,
\end{aligned}
\]
\[
\begin{aligned}
P_{331}(x)={}&
x^9-19080x^8+148150832x^7-607495065856x^6\\
&+1431366761446400x^5
-1981583862934929408x^4\\
&+1583799014244294328320x^3
-685739953644803595436032x^2\\
&+138810227549705285243240448x\\
&-8776435621328021437292740608,
\end{aligned}
\]
\[
\begin{aligned}
P_{322}(x)={}&
x^{12}-40520x^{11}+620311472x^{10}
-4851806749440x^9\\
&+22015235647970304x^8
-62127082894632026112x^7\\
&+113249954485810190352384x^6\\
&-135495586956110332164046848x^5\\
&+106082363697098980561935925248x^4\\
&-53055005610186039216567566204928x^3\\
&+16048234410943946484637737947234304x^2\\
&-2620052765763154422775813914237075456x\\
&+172813474477076890379210007659937792000,
\end{aligned}
\]
\[
\begin{aligned}
P_{3211}(x)={}&
x^{16}-18784x^{15}+153867456x^{14}
-729045024768x^{13}\\
&+2235792198519552x^{12}
-4705700621379563520x^{11}\\
&+7031807294540049727488x^{10}
-7610371152578961214537728x^9\\
&+6027717063886071429152636928x^8\\
&-3503100937900414956811201806336x^7\\
&+1486485293266133421271370786930688x^6\\
&-454304762683006104172061688468602880x^5\\
&+97517052283840857886924733193494986752x^4\\
&-14091654342562438812489840764393267134464x^3\\
&+1276377933248674820443870831388683784945664x^2\\
&-63675578428897621499688574340486137368281088x\\
&+1306987470908805680901573507378755629681213440,
\end{aligned}
\]
\[
P_{31111}(x)
=
x^5-1848x^4+1210320x^3-336752640x^2
+37847678976x-1330076712960,
\]
\[
\begin{aligned}
P_{2221}(x)={}&
x^7-11760x^6+48285312x^5-87944976384x^4\\
&+76607816859648x^3
-31729181282795520x^2\\
&+5537375545341247488x
-267143035125850177536,
\end{aligned}
\]
\[
P_{22111}(x)
=
x^4-1704x^3+868560x^2-129037824x+5644615680,
\]
and
\[
P_{211111}(x)=x-120.
\]

Therefore $\det(tI-U_7)=t^{2695}q_7(t),$
 and the degree-seven membership criterion is
\[
{\;
a\in J_7
\quad\Longleftrightarrow\quad
a\,q_7(U_7)=0.
\;}
\]

\end{document}